\theoremstyle{plain}
\newtheorem{master}{Master}[section]
\newtheorem{prop}[master]{Proposition}
\newtheorem{thm}[master]{Theorem}
\newtheorem{fact}[master]{Fact}
\newtheorem{lem}[master]{Lemma}
\newtheorem{claim}[master]{Claim}
\theoremstyle{definition}
\newtheorem{defin}[master]{Definition}
\theoremstyle{remark}
\newtheorem{remark}[master]{Remark}
\numberwithin{equation}{section}
\newcommand{\Rea}{\mathbb{R}}
\newcommand{\Nat}{\mathbb{N}}
\newcommand{\Rat}{\mathbb{Q}}
\newcommand{\Int}{\mathbb{Z}}
\begin{document}
\title{Metrically universal abelian groups}
\author{Michal Doucha}
\address{Institute of Mathematics, Academy of Sciences, Prague, Czech republic}
\address{Institute of Mathematics, Polish Academy of Sciences, Warsaw, Poland}
\email{doucha@math.cas.cz}
\thanks{The research of the author was partially supported by the grant IAA100190902 of Grant Agency of the Academy of Sciences
of the Czech Republic and by funds allocated to the implementation of the international co-funded project in the years 2014-2018, 3038/7.PR/2014/2, and by the EU grant PCOFUND-GA-2012-600415.}
\keywords{Universal group, Fra\" iss\' e theory, Urysohn space}
\subjclass[2010]{22A05, 54H11,03C98}
\begin{abstract}
We give a positive answer to the question of Shkarin (\emph{On universal abelian topological groups}, Mat. Sb.  190  (1999), no. 7, 127-144) whether there exists a metrically universal abelian separable group equipped with invariant metric.

Our construction also gives an example of a group structure on the Urysohn universal space that is substantially different from the previously known examples.

Under some cardinal arithmetic assumptions, our results generalize to higher cardinalities.
\end{abstract}
\maketitle
\section*{Introduction}
In \cite{Sk}, S. A. Shkarin constructed a separable abelian metric group $G$ that is topologically universal for the class of separable metrizable abelian groups. It means that for every abelian separable metrizable topological group $H$ there exists $\iota:H\hookrightarrow G$ which is both a group monomorphism and a topological embedding. This group was further investigated by P. Niemiec in \cite{Nie1}. Problem 1 in the Shkarin's paper is whether there exists a separable abelian group with an invariant metric which is metrically universal for the class of all separable abelian groups equipped with invariant metric, i.e. whether there exists a separable abelian metric group $(G,d)$, $d$ is invariant, such that for any other separable abelian metric group $(H,p)$, $p$ being invariant, there is $\iota:H\hookrightarrow G$ which is both a group monomorphism and an isometric map. We provide a positive answer to this question here. Thus the following is the main result of this paper.
\begin{thm}\label{main_theorem}
There exists a separable abelian metric group $(\mathbb{G},d)$, where $d$ is invariant, such that for any separable abelian metric group $(H,p)$, with $p$ invariant, there is a subgroup $H'\leq \mathbb{G}$ and an isometric isomorphism $\iota: H\rightarrow H'$.
\end{thm}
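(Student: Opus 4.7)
The plan is to construct $\mathbb{G}$ as the metric completion of a countable Fra\"iss\'e-type limit in a suitable class of ``finite'' abelian metric groups. Let $\Age$ be the class of pairs $(A, d_A)$ where $A$ is a finitely generated abelian group and $d_A$ is a translation-invariant metric on $A$ taking values in $\Rat_{\geq 0}$, with morphisms being group monomorphisms that are also isometries. Up to isomorphism this class is countable. Provided $\Age$ satisfies the joint embedding and amalgamation properties, the associated Fra\"iss\'e limit $\mathbb{G}_0$ is a countable abelian group with a rational-valued invariant metric that contains every member of $\Age$ as a subobject; its metric completion $\mathbb{G}$ is then a separable abelian group with an invariant metric, and is the candidate for the theorem.

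Joint embedding for $\Age$ is immediate: given $(A, d_A)$ and $(B, d_B)$ one may take the direct sum $A \oplus B$ with norm $\|(a,b)\| = \|a\|_A + \|b\|_B$. The heart of the matter is \emph{amalgamation}. Given $(A, d_A) \hookrightarrow (B, d_B)$ and $(A, d_A) \hookrightarrow (C, d_C)$, I would form the group-theoretic pushout $D = (B \oplus C)/N$, with $N = \{(a, -a) : a \in A\}$, and endow $D$ with the largest invariant pseudo-norm that restricts to the original norms on $B$ and $C$ --- concretely,
$$\|x\|_D = \inf\{\|b\|_B + \|c\|_C : x = [b] + [c] \text{ in } D\}.$$
One then checks that $\|\cdot\|_D$ is actually positive-definite, hence a genuine norm; that the canonical maps $B, C \hookrightarrow D$ remain isometric; and that $d_D$ takes rational values, if necessary after a small controlled perturbation inside $D$ that does not disturb $d_B$ or $d_C$.

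Granted the existence of the limit $\mathbb{G}_0$ and its completion $\mathbb{G}$, universality follows by an approximation scheme. Let $(H,p)$ be any separable abelian group with invariant metric, and pick a countable dense subgroup with enumeration $\{h_n\}_{n\in\Nat}$. Consider the ascending chain of finitely generated subgroups $H_n = \langle h_1, \ldots, h_n\rangle$, and approximate $p$ on $\bigcup_n H_n$ by an increasing sequence of rational-valued invariant metrics whose pointwise limit recovers $p$. Each approximating stage lies in $\Age$, so the extension/ultrahomogeneity property of $\mathbb{G}_0$ produces compatible embeddings $H_n \hookrightarrow \mathbb{G}_0$ whose union is an isometric group embedding of $\bigcup_n H_n$ into $\mathbb{G}_0$; taking closures yields the desired isometric embedding $H \hookrightarrow \mathbb{G}$.

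The principal obstacle is the amalgamation step, and within it the positive-definiteness of $\|\cdot\|_D$ together with the fact that $B$ and $C$ re-embed isometrically. The nontrivial content is that an element of $B$, viewed inside $D$, cannot be rewritten more efficiently as $[b']+[c']$ with $\|b'\|_B+\|c'\|_C$ strictly less than $\|b\|_B$; this must be proved using that $B \cap C = A$ inside $D$ and that $d_B$ and $d_C$ already agreed on $A$. A secondary but genuine complication is staying within rationally-valued metrics after amalgamation, which is the reason for restricting the age to $\Rat$-valued metrics in the first place and may require a careful rational approximation of $d_D$ on the pushout without spoiling isometricity of the inclusions.
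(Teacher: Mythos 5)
Your proposal breaks down at its first step: the class $\Age$ of \emph{all} finitely generated abelian groups with \emph{arbitrary} rational-valued invariant metrics is not countable up to isometric isomorphism. Already on $\Int$ an invariant rational metric is an arbitrary subadditive symmetric function $\Nat\to\Rat^+$, and there are continuum many pairwise non-isometric choices; hence there is no countable Fra\"iss\'e limit of this class. This is precisely why the paper restricts to \emph{finitely generated} metrics (metrics determined by finitely many prescribed rational distances) on groups $\Int^n$, and why it must then replace ordinary substructures by the direct-summand relation $\sqsubseteq$ (a subgroup of a group with finitely generated metric need not itself carry a finitely generated metric, so hereditarity and amalgamation over arbitrary subgroups fail for that class). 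Once you make that restriction, the entire difficulty of the theorem migrates to the approximation of an arbitrary invariant metric by finitely generated ones, which your sketch does not contain. Your amalgamation step is comparatively benign in the abelian two-sided-invariant setting (the pushout pseudonorm does restrict correctly to $B$ and $C$), but it is not where the theorem lives.

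The second gap is in the universality argument. If the approximating metrics on $H_n$ only converge pointwise to $p$, the embeddings you get at successive stages are embeddings of \emph{different} metric structures, and they cannot be ``compatible'' in the sense of extending one another: the image of $h_1$ must move as the approximating metric changes. In particular you cannot obtain an isometric copy of $(\bigcup_n H_n,p)$ inside the countable rational-valued structure $\mathbb{G}_0$ when $p$ takes irrational values; the embedding can only exist in the completion. What is needed — and what the paper supplies — is a quantitative closeness notion ($\varepsilon$-closeness, with error proportional to word length), an approximation theorem producing finitely generated rational metrics $1/2^n$-close to $p$ (using $\lim_m d(m\cdot g)/m$), and a consistency argument gluing stage $n-1$ to stage $n$ with $d(s^{n-1}_i,s^n_i)=1/2^{n-1}$ so that the images form Cauchy sequences whose limits in $\mathbb{G}$ generate the desired isometric copy. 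The paper also first reduces to groups of the form $\bigoplus_{n\in\Nat}\Int$ via a density argument (every separable abelian group with invariant metric embeds isometrically into one with a dense free abelian subgroup), which lets the whole construction stay inside torsion-free summands. Without the countability fix and the Cauchy-approximation scheme, your outline does not yield the theorem.
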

Another interesting phenomenon investigated recently is the group structures on the Urysohn universal space. In \cite{CaVe}, P. Cameron and A. Vershik prove that there exists an abelian group structure on the Urysohn space which is monothetic, i.e. it contains a dense cyclic subgroup. Niemiec in \cite{Nie2} proves that there is a structure of a Boolean group on the Urysohn space and that this group is moreover metrically universal for the class of separable metric abelian Boolean groups. Further, Niemiec in \cite{Nie1} proves that the Shkarin's group is also isometric to the Urysohn space. However, it remains open and stated as an open question there whether the Shkarin's group is monothetic, even whether it is different from the type of groups considered by Cameron and Vershik. Here we prove that the group $\mathbb{G}$ from Theorem \ref{main_theorem} is also isometric to the Urysohn universal space and moreover that it is isometrically isomorphic neither to the Shkarin/Niemiec's group nor to the Cameron-Vershik groups.
\section{Definitions and preliminaries}
\begin{defin}
Let $(G,d)$ be a metric group, i.e. group equipped with a metric so that the group operations are continuous in the topology given by the metric. We say that the metric $d$ is finitely generated if there exists a finite set $A\subseteq G^2$ (called generating set for the metric) such that for every $a,b\in G$ we have $d(a,b)=\min\{d(a_1,b_1)+\ldots+d(a_n,b_n):n\in \Nat,(a_i,b_i)\in A\wedge a=a_1\cdot\ldots\cdot a_n,b=b_1\cdot\ldots\cdot b_n,i\leq n\}$. In particular, we assume that for any $a,b\in G$ there exist $(a_1,b_1),\ldots,(a_n,b_n)\in A$ such that $a=a_1\cdot\ldots\cdot a_n$, $b=b_1\cdot\ldots\cdot b_n$, thus $G$ must be finitely generated.
\end{defin}
\begin{lem}\label{greatestmet_lem}
Let $d$ be a finitely generated metric on some group $G$ which is generated by the set $A\subseteq G^2$. Then $d$ is two-sided invariant. Moreover, for any other two-sided invariant metric $p$ on $G$ which agrees with $d$ on $A$ we have $p\leq d$.
\end{lem}
\begin{proof}
Recall that a metric $d$ on a group $G$ is two-sided invariant iff for every $x,y,u,v\in G$ we have $d(x\cdot y,u\cdot v)\leq d(x,u)+d(y,v)$. It is clear that finitely generated metrics satisfy this property.

Moreover, if $p$ is any other two-sided invariant metric on $G$ which agrees with $d$ on $A$, then because of invariance of $p$, for every $a,b\in G$ and $(a_1,b_1),\ldots,(a_n,b_n)\in A$ such that $a=a_1\cdot\ldots\cdot a_n$, $b=b_1\cdot\ldots\cdot b_n$ we must have $p(a,b)\leq p(a_1,b_1)+\ldots+p(a_n,b_n)$, thus $p(a,b)\leq d(a,b)$.
\end{proof}
The following simple fact will be used without explicit referring through the rest of the paper.
\begin{fact}
Let $(G,d)$ be a group with a metric finitely generated by the set $A\subseteq G^2$. Let $a,b\in G$ be arbitrary and let $(a_1,b_1),\ldots,(a_n,b_n)\in A$ be such that $a=a_1\cdot\ldots\cdot a_n$, $b=b_1\cdot\ldots\cdot b_n$ and $d(a,b)=d(a_1,b_1)+\ldots+d(a_n,b_n)$. Then for any $1\leq i\leq j\leq n$ we have $d(a_i\cdot\ldots\cdot a_j,b_i\cdot\ldots \cdot b_j)=d(a_i,b_i)+\ldots+d(a_j,b_j)$.
\end{fact}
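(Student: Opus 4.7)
My plan is to establish the Fact by a sandwich argument showing that optimality of a full decomposition forces optimality of each sub-decomposition. The only tools I need are the two-sided invariance proved in the preceding lemma (which iterates to give $d(xyz,x'y'z')\leq d(x,x')+d(y,y')+d(z,z')$) together with the defining minimum property of the finitely generated metric.

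Concretely, I would set $\alpha=a_1\cdots a_{i-1}$, $\beta=b_1\cdots b_{i-1}$, $\gamma=a_{j+1}\cdots a_n$, $\delta=b_{j+1}\cdots b_n$, with the convention that an empty product denotes the identity when $i=1$ or $j=n$. Since $a=\alpha\cdot(a_i\cdots a_j)\cdot\gamma$ and $b=\beta\cdot(b_i\cdots b_j)\cdot\delta$, iterating two-sided invariance yields
\[d(a,b)\leq d(\alpha,\beta)+d(a_i\cdots a_j,b_i\cdots b_j)+d(\gamma,\delta).\]
Moreover, because the metric is defined as a minimum over decompositions with factors from $A$, each summand on the right admits the obvious sub-decomposition as a witness, giving $d(\alpha,\beta)\leq\sum_{k<i}d(a_k,b_k)$, $d(a_i\cdots a_j,b_i\cdots b_j)\leq\sum_{i\leq k\leq j}d(a_k,b_k)$, and $d(\gamma,\delta)\leq\sum_{k>j}d(a_k,b_k)$.

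Adding the three estimates yields exactly $\sum_{k=1}^{n}d(a_k,b_k)=d(a,b)$, so the whole chain of inequalities must collapse to equality. In particular, the middle term equals $\sum_{k=i}^{j}d(a_k,b_k)$, which is the desired conclusion. The only bookkeeping to watch is the boundary cases $i=1$ and $j=n$, where the $(\alpha,\beta)$ or $(\gamma,\delta)$ contribution is trivial and contributes $0$ to both sides; these cause no issue. I do not anticipate a genuine obstacle, since the content of the Fact is just packaging this minimality-transfer principle for repeated use later in the paper.
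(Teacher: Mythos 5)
Your proof is correct. The paper states this Fact without any proof (it is introduced as a ``simple fact'' to be used without explicit reference), and your sandwich argument --- combining the iterated two-sided invariance $d(a,b)\leq d(\alpha,\beta)+d(a_i\cdots a_j,b_i\cdots b_j)+d(\gamma,\delta)$ with the three sub-decomposition upper bounds whose sum equals $d(a,b)$, forcing equality termwise --- is precisely the standard justification the author leaves implicit, with the boundary cases $i=1$, $j=n$ handled correctly.
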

In what follows we shall consider finitely generated abelian groups with finitely generated metrics. In particular, we shall use the additive notation for the group operations and the neutral element. 
\begin{remark}\label{defin_remark}
In practice, when defining a finitely generated metric on some group $G$ we prescribe the distances on some finite set $A\subseteq G^2$ and then for any $a,b\in G$ we put $$d(a,b)=\min\{d(a_1,b_1)+\ldots+d(a_n,b_n):a=a_1+\ldots+a_n,$$ $$b=b_1+\ldots+b_n,(a_i,b_i)\in A,i\leq n\}.$$ We will not assume that $A$ is minimal possible, i.e. we will allow that for some $(a,b)\in A$ we have $d(a,b)=d(a_1,b_1)+\ldots+d(a_n,b_n)$ for some $(a_1,b_1),\ldots,(a_n,b_n)\in A$ such that $a=a_1+\ldots+a_n$ and $b=b_1+\ldots+b_n$. However, we will always assume that the prescription is consistent in the sense that for every $(a,b)\in A$ and any $(a_1,b_1),\ldots,(a_n,b_n)\in A$ such that $a=a_1+\ldots+a_n$ and $b=b_1+\ldots+b_n$ we really have $d(a,b)\leq d(a_1,b_1)+\ldots+d(a_n,b_n)$.

This problem will occur later in the text (in the proof of Claim \ref{claim2}) when we will have finitely generated metrics on some group $G_1$ and $G_2$ with generating sets $A_1\subseteq G_1^2$ and $A_2\subseteq G_2^2$. We will want to define a finitely generated metric on $G_1\oplus G_2$ so that the set $A=A_1\cup A_2\cup \{(a_i,b_i):1\leq i\leq n\}$, where for each $i\leq n$, $a_i\in G_1$ and $b_i\in G_2$, will be generating. For a pair $(a,b)\in A_i$ we naturally set $d_{G_1\oplus G_2}(a,b)=d_{G_i}(a,b)$ for $i\in \{1,2\}$. However, now the consistency requirement will not be obvious (and necessarilly true) and we will have to prove it.

Finally, let us mention a situation where the consistency requirement is always satisfied. Suppose that $G$ is an abelian group with invariant metric $d$ and $A\subseteq G$ is a finite generating subset. Let $\rho$ be an invariant metric on $G$ generated by values of $d$ on $A^2$, i.e. for every $a,b\in G$ we have $\rho(a,b)=\min\{d(a_1,b_1)+\ldots+d(a_n,b_n):n\in \Nat,(a_i,b_i)\in A^2\wedge a=a_1\cdot\ldots\cdot a_n,b=b_1\cdot\ldots\cdot b_n,i\leq n\}$. Then clearly for each $a,b\in A$ we do have $\rho(a,b)=d(a,b)$.\\
\end{remark}

We add here a brief informal introduction to the Fra\" iss\' e theory that will be used in this paper. By no means we try to be as general as possible. We present just the basic theory almost as it appeared already in 1950's in the work of Fra\" iss\' e (see \cite{Fr}), and we shall only treat Fra\" iss\' e theory for groups. We refer to (Chapter 7, \cite{Ho}) for a detailed exposition of this classical Fra\" iss\' e theory that is very similar to the approach we take here (the reader is encouraged to look at Exercise 11 of Chapter 7 in \cite{Ho} as that is basically the version of the Fra\" iss\' e theorem we need). For a very general category-theoretic approach to Fra\" iss\' e theory we refer the reader to \cite{Kub}.

Consider some class of groups which is closed under taking direct limits. Let $\mathcal{K}$ be some countable subclass and let $\mathcal{M}$ denote a set of monomorphisms between groups from $\mathcal{K}$. For two groups $A,B\in \mathcal{K}$ we write $A\sqsubseteq B$ if there exists a monomorphism from $A$ into $B$ that belongs to $\mathcal{M}$.

We say that $\mathcal{K}$ satisfies the $\sqsubseteq$-amalgamation property if whenever we have $A,B,C\in \mathcal{K}$ such that $A\sqsubseteq B$ is witnessed by some monomorphism $\phi_B\in \mathcal{M}$ and $A\sqsubseteq C$ is witnessed by some monomorphism $\phi_C\in \mathcal{M}$, then there exists $D\in \mathcal{K}$ such that $B\sqsubseteq D$ and $C\sqsubseteq D$ are witnessed by monomorphisms $\psi_B\in \mathcal{M}$, resp. $\psi_C\in \mathcal{M}$ such that $\psi_C\circ \phi_C=\psi_B\circ \phi_B$.

We shall call such $(\mathcal{K},\mathcal{M},\sqsubseteq)$ a $\sqsubseteq$-Fra\" iss\' e class (of groups). We shall usually omit the set $\mathcal{M}$ from the notation and write just $(\mathcal{K},\sqsubseteq)$.\\

Denote by $\bar{\mathcal{K}}$ the set of all direct limits of groups from $\mathcal{K}$. Thus if $\mathcal{K}$ is for instance the class of all finitely generated abelian groups, then $\bar{\mathcal{K}}$ will be the class of all at most countably generated abelian groups.

If $A\in\mathcal{K}$ and $B\in \bar{\mathcal{K}}$, then we write $A\sqsubseteq B$ if $B$ can be written as a direct limit $B_1\sqsubseteq B_2\sqsubseteq\ldots$ and there is some $m$ such that $A\sqsubseteq B_m$. We then have the following theorem.
\begin{thm}[Fra\" iss\' e theorem (for groups), see \cite{Ho} or \cite{Kub} ]\label{fraissethm}
Let $(\mathcal{K},\sqsubseteq)$ be a $\sqsubseteq$-Fra\" iss\' e class of groups. Then there exists a unique up to isomorphism group $K\in\bar{\mathcal{K}}$ with the following properties:
\begin{enumerate}
\item $K$ is a direct limit of some sequence $K_1\sqsubseteq K_2\sqsubseteq\ldots$, where $K_i$'s are from $\mathcal{K}$.

\item For any $A\in \mathcal{K}$ we have $A\sqsubseteq K$.
\item For any $A_1,A_2\in \mathcal{K}$ and embeddings $\iota_1:A_1\rightarrow K_n$, $\rho:A_1\rightarrow A_2$, for some $n$ and where $\iota_1,\rho\in \mathcal{M}$, there exists $m>n$ and an embedding $\iota_2: A_2\rightarrow K_m$ such that $\iota_2\circ \rho=\iota_1$.
\item If $A_1, A_2\in \mathcal{K}$ are isomorphic and we have $A_1,A_2\sqsubseteq K$ which is witnessed by embeddings $\iota_1,\iota_2\in \mathcal{M}$, then there exists an automorphism of $K$ sending $\iota_1[A_1]$ onto $\iota_2[A_2]$.

\end{enumerate}
\end{thm}

We call such $K$ the Fra\" iss\' e limit of $\mathcal{K}$.\\

Let us also make few notes concerning notations here. If $X$ is a metric space then by $d_X$ we denote its metric. However, in some cases when there is no danger of confusion we may just denote the metric as $d$. Similarly, if $G$ is an abelian group then by $0_G$ we denote the neutral element, and in some cases we may just denote it as $0$. Moreover, if $(G,d_G)$ is a metric group with neutral element $0_G$ then by $d_G(x)$, for every $x\in G$, we mean $d_G(x,0_G)$, the corresponding value on the group, i.e. the distance of $x$ from the neutral element.

We conclude this section by the following simple fact that will be used without referring in the paper.
\begin{fact}[Lemma 1 in \cite{Sk}]
Let $(G,d)$ be an abelian group equipped with an invariant metric. Then for any $g\in G$ the sequence $(\frac{d(n\cdot g)}{n})_{n\in \Nat}$ converges and we have $\lim_{n\to \infty} \frac{d(n\cdot g)}{n}=\inf_n \frac{d(n\cdot g)}{n}$.
\end{fact}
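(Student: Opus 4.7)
The plan is to recognize this as Fekete's subadditive lemma applied to the sequence $a_n := d(n \cdot g)$. First I would verify subadditivity: using invariance and the triangle inequality, for all $n,m \in \mathbb{N}$,
\[
a_{n+m} = d((n+m)g, 0) \leq d((n+m)g, ng) + d(ng, 0) = d(mg, 0) + d(ng, 0) = a_n + a_m,
\]
where the second equality uses invariance to subtract $ng$ in the first summand. Note also $a_0 = 0$.

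Next I would run the standard Fekete argument. Set $L := \inf_n a_n/n \geq 0$; this infimum is finite (it is at most $a_1$). Fix $\varepsilon > 0$ and choose $m \geq 1$ with $a_m/m < L + \varepsilon$. For any $n \geq 1$, write $n = qm + r$ with $0 \leq r < m$. Iterating subadditivity yields
\[
a_n \leq q \, a_m + a_r,
\]
and hence
\[
\frac{a_n}{n} \leq \frac{qm}{n} \cdot \frac{a_m}{m} + \frac{a_r}{n}.
\]
As $n \to \infty$, $qm/n \to 1$ and, since $r$ ranges over the finite set $\{0,1,\dots,m-1\}$, the term $a_r$ stays bounded so $a_r/n \to 0$. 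Therefore $\limsup_{n} a_n/n \leq a_m/m < L + \varepsilon$. Combined with $L \leq \liminf_n a_n/n$ and the fact that $\varepsilon$ is arbitrary, this gives $\lim_n a_n/n = L = \inf_n a_n/n$, as required.

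There is no real obstacle here; the only point deserving care is the use of invariance of $d$, which is exactly what lets the triangle inequality translate into subadditivity of the word-length-type sequence $a_n$. Everything else is a textbook application of Fekete's lemma.
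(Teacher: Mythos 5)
Your proof is correct. The paper does not actually supply a proof of this fact --- it is quoted as Lemma 1 of \cite{Sk} and used as a black box --- and your argument (establishing subadditivity of $n\mapsto d(n\cdot g)$ from invariance and the triangle inequality, then applying Fekete's lemma) is precisely the standard proof of that lemma, with all steps, including the division $n=qm+r$ and the limits $qm/n\to 1$, $a_r/n\to 0$, carried out correctly.
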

\section{The $\sqsubseteq$-Fra\" iss\' e class of groups}
In this section, we construct the metric group $(\mathbb{G},d)$ from Theorem \ref{main_theorem} and prove that it is isometric to the Urysohn space. We start with a proposition dealing with realizing Kat\v etov functions on metric abelian groups. Let us recall that a function $f:X\rightarrow \Rea^+$, where $X$ is some metric space and $\Rea^+$ denotes the set of all positive reals, is called Kat\v etov if it satisfies for every $x,y\in X$, $(|f(x)-f(y)|\leq d_X(x,y)\leq f(x)+f(y)$. This function is supposed to prescribe distances from some new imaginary point to the points of $X$. We refer to the paper \cite{Kat} of Kat\v etov for more details.
\begin{prop}\label{Katetov_realiz}
Let $(G,d)$ be an abelian group with invariant metric $d$. Let $A\subseteq G$ be some finite subset and let $f:A\rightarrow \Rea^+$ be a Kat\v etov function. Then there exists an invariant metric $\bar{d}$ on $G\oplus \Int$ that extends $d$ and if we denote the generator of the added copy of $\Int$ as $z$ then for every $a\in A$, $(\bar{d}(z,a)=f(a))$. In other words, the Kat\v etov function $f$ is realized as a distance function from $z$.

Moreover, if the metric $d$ on $G$ is rational-valued and finitely generated, and $f$ has values in the rationals, then the metric $\bar{d}$ can be made also rational-valued and finitely generated.
\end{prop}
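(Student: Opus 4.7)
My plan is to define $\bar d$ in terms of an explicit ``norm-like'' formula and then verify that it restricts to $d$ on $G$ and to $f$ on the prescribed pairs. Since everything in sight is invariant, it suffices to define the norm $\bar N(x):=\bar d(x,0)$ on $G\oplus\Int$. Writing the generator of the new $\Int$-factor as $z$, I set
\[
\bar N(g+nz)=\inf\Bigl\{\,N(h)+\sum_{i=1}^{k}f(a_i)\;:\;g+nz=h+\sum_{i=1}^{k}\epsilon_i(z-a_i),\;h\in G,\;\epsilon_i\in\{\pm1\},\;a_i\in A\Bigr\},
\]
where $N(\cdot):=d(\cdot,0_G)$. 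Note the constraint forces $\sum_i\epsilon_i=n$, while $h=g+\sum_i\epsilon_i a_i$. Symmetry $\bar N(-x)=\bar N(x)$ is trivial (flip all $\epsilon_i$); the subadditivity $\bar N(x+y)\le\bar N(x)+\bar N(y)$ follows from concatenating representations; invariance of the induced $\bar d$ is then automatic.

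The main work is the two consistency checks. \emph{Extension of $d$:} given $g\in G$, any representation has $\sum\epsilon_i=0$, so I can match the $+1$-terms $a_1,\dots,a_k$ with the $-1$-terms $b_1,\dots,b_k$ in pairs. Then $g=h+\sum_j(b_j-a_j)$, so by the triangle inequality for $N$ and then the Kat\v etov inequality $d(a_j,b_j)\le f(a_j)+f(b_j)$ I get
\[
N(g)\le N(h)+\sum_{j}d(a_j,b_j)\le N(h)+\sum_j f(a_j)+\sum_j f(b_j),
\]
showing $\bar N(g)\ge N(g)$; the reverse inequality is trivial. \emph{Realization of $f$:} for $a\in A$ and any representation of $z-a$, there must be one ``unpaired'' $+1$-term, say $a_{k+1}$, with the remaining terms matched as above. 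Then $a=a_{k+1}+\sum_{j=1}^{k}(a_j-b_j)-h$, and by the Kat\v etov inequality applied to $a$ and $a_{k+1}$ followed by the same triangle/Kat\v etov estimate,
\[
f(a)\le f(a_{k+1})+d(a,a_{k+1})\le f(a_{k+1})+N(h)+\sum_{j=1}^{k}\bigl(f(a_j)+f(b_j)\bigr),
\]
giving $\bar N(z-a)\ge f(a)$, again matching the obvious upper bound. Non-degeneracy of $\bar d$ on the $G$-part is immediate from $\bar N|_G=N$; for $n\ne0$ the representation must contain at least $|n|$ summands, so $\bar N(g+nz)\ge|n|\min_{a\in A}f(a)>0$ (the trivial case where $f$ vanishes on some element is handled by identifying $z$ with that element, reducing to no extension).

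For the moreover clause, I take the finite generating set $\bar A:=A_1\cup\{(z,a),(a,z):a\in A\}\cup\{(-z,-a),(-a,-z):a\in A\}$, where $A_1$ is the finite rational generating set of $d$, with the new distances $f(a)\in\Rat$. The metric $\bar d_{\bar A}$ thus finitely generated from $\bar A$ by the prescription in Remark~\ref{defin_remark} coincides with the $\bar N$ defined above (only finitely many $a_i\in A$ are available and $N$ itself is generated from $A_1$), so the same two estimates prove the consistency requirement of Remark~\ref{defin_remark} for $\bar A$, making $\bar d_{\bar A}$ well-defined, rational, and finitely generated.

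The main obstacle is simply the two consistency computations above: ensuring that adding the pairs $(z,a)$ does not shorten any existing distance in $G$ and does not shorten $\bar d(z,a)$ below $f(a)$. Both rely crucially on the Kat\v etov inequality to convert the inevitable pairing of ``up'' and ``down'' $z$-steps into an estimate in $G$, and once this is set up the rest is bookkeeping.
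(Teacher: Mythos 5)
Your proposal is correct and follows essentially the same route as the paper: you define $\bar d$ by the same minimum-over-representations formula (yours phrased as a norm, the paper's phrased via a symmetrized $A$), and both verifications rest on the same use of the two Kat\v etov inequalities after matching the $+1$ and $-1$ occurrences of $z$ in pairs — the paper peels off one pair at a time by induction, while you sum over all pairs at once, which is only a cosmetic difference. Your explicit remarks on positivity for $n\neq 0$ and on the generating set for the ``moreover'' clause are fine and if anything slightly more complete than the paper's.
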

\begin{proof}
We may suppose that $A$ is symmetric since if it were not we would take the least symmetric superset $A'$ of $A$ and extend $f$ to $f'$ on $A'$ arbitrarily so that it would still be a Kat\v etov function, e.g. for every $a\in A'$ we could set $f'(a)=\min\{d(a,b)+f(b):b\in A\}$. Consider $G\oplus \Int$ and denote the generator of this new copy of the integers $\{0_G\}\oplus \Int$ as $z$. We now define $\bar{d}$ on  $G\oplus \Int$. Because of the invariance of the metric it suffices to define the distance $\bar{d}$ between elements of the form $0_G+n\cdot z$ and $g+0\cdot z$ from $G\oplus \Int$, where $n\in \Int$ and $g\in G$. In order to simplify the notation, we shall write just $n\cdot z$, resp. just $g$, instead of formally correct $0_G+n\cdot z$, resp. $g+0\cdot z$. Thus let $n\in \Int$ and $g\in G$ be arbitrary, we set $$\bar{d}(n\cdot z,g)= \min\{d(g,h)+f(\varepsilon_1\cdot h_1)+\ldots+f(\varepsilon_m\cdot h_m):m\in \Nat\cup\{0\},$$ $$h\in G,h_1+\ldots+h_m=h, h_i\in A\wedge \varepsilon_i\in\{1,-1\}\forall i\leq m,\sum_{i=1}^m \varepsilon_i=n\}.$$

\begin{enumerate}
\item\label{item1} We need to check that if $n=0$ then $\bar{d}(n\cdot z,g)=d(g,0_G)$ for every $g\in G$.

It follows directly from the definition of $\bar{d}$ that we have $d(g,0_G)\geq \bar{d}(0\cdot z,g)$. To prove the reverse inequality we must show that for every $h\in G$, any $h_1,\ldots,h_m\in A$ such that $h_1+\ldots+h_m=h$ and any $\varepsilon_1,\ldots,\varepsilon_m\in \{1,-1\}$ such that $\varepsilon_1+\ldots+\varepsilon_m=0$ (note that $m$ is even) we have $d(g,h)+f(\varepsilon_1\cdot h_1)+\ldots+f(\varepsilon_m\cdot h_m)\geq d(g,0_G)$. Write $m$ as $2l$ and we proceed by induction on $l$.

If $l=0$ it is obvious. Suppose $l=1$. So we have some $h_1,h_2\in A$ and we must prove that $d(g,0_G)\leq d(g,h_1+h_2)+f(h_1)+f(-h_2)$. Since $f$ is Kat\v etov we have $f(h_1)+f(-h_2)\geq d(h_1,-h_2)=d(h_1+h_2,0_G)$. By triangle inequality, we immediately get $d(g,0_G)\leq d(g,h_1+h_2)+d(h_1+h_2,0_G)\leq d(g,h_1+h_2)+f(h_1)+f(-h_2)$.

Suppose now that $l>1$ and we have proved the claim for all values smaller. Then we take some $i,j\leq 2l$ such that $\varepsilon_i=1$ and $\varepsilon_j=-1$. Since $f$ is Kat\v etov we have $f(h_i)+f(-h_j)\geq d(h_i,-h_j)=d(h_i+h_j,0_G)=d(h-h_i-h_j,h)$. Thus we have $d(g,h)+f(\varepsilon_1\cdot h_1)+\ldots+f(\varepsilon_m\cdot h_m)\geq d(g,h)+d(h-h_i-h_j,h)+\sum_{k\in m\setminus\{i,j\}} f(\varepsilon_k\cdot h_k)\geq d(g,h-h_i-h_j)+\sum_{k\in m\setminus\{i,j\}} f(\varepsilon_k\cdot h_k)$ and we use the inductive assumption and we are done.
\item\label{item2} It remains to check that if $g\in A$ then $\bar{d}(z,g)=f(g)$.

The proof is similar to the previous one. Again, the inequality $\bar{d}(z,g)\leq f(g)$ follows directly from the definition of $\bar{d}$. To prove the reverse inequality, we must show that for every $h\in G$, any $h_1,\ldots,h_m\in A$ such that $h_1+\ldots+h_m=h$ and any $\varepsilon_1,\ldots,\varepsilon_m\in \{1,-1\}$ such that $\varepsilon_1+\ldots+\varepsilon_m=1$ (note that $m$ is now odd) we have $d(g,h)+f(\varepsilon_1\cdot h_1)+\ldots+f(\varepsilon_m\cdot h_m)\geq f(g)$. Write $m$ as $2l+1$ and we proceed by induction on $l$.

If $l=0$ then we have to prove that $d(g,h)+f(h)\geq f(g)$. However, this follows immediately since $f$ is Kat\v etov. Suppose now that $l>0$ and we have proved the claim for all smaller values. We again take some $i,j\leq 2l+1$ such that $\varepsilon_i=1$ and $\varepsilon_j=-1$. Since $f$ is Kat\v etov we have $f(h_i)+f(-h_j)\geq d(h_i,-h_j)=d(h_i+h_j,0_G)=d(h-h_i-h_j,h)$. Thus we have $d(g,h)+f(\varepsilon_1\cdot h_1)+\ldots+f(\varepsilon_m\cdot h_m)\geq d(g,h)+d(h-h_i-h_j,h)+\sum_{k\in m\setminus\{i,j\}} f(\varepsilon_k\cdot h_k)\geq d(g,h-h_i-h_j)+\sum_{k\in m\setminus\{i,j\}} f(\varepsilon_k\cdot h_k)$ and we use the inductive assumption and we are done.
\end{enumerate}

Now note that in case $d$ is rational-valued and finitely generated and $f$ has values in the rationals, $\bar{d}$ is indeed rational-valued and finitely generated. To see this, let $B\subseteq G^2$ be the generating set for $d$. Without loss of generality, we may assume that $A^2\subseteq B$. We now check that $\bar{d}$ is generated by the set $B\cup \{(\varepsilon\cdot z,\varepsilon\cdot a),(\varepsilon\cdot a,\varepsilon\cdot z):a\in A,\varepsilon\in\{-1,1\}\}$. For any $n\in \Int$ and $g\in G$ we need to check that the values $$d_1=\bar{d}(n\cdot z,g)=\min\{d(g,h)+f(\varepsilon_1\cdot h_1)+\ldots+f(\varepsilon_m\cdot h_m):m\in \Nat\cup\{0\},$$ $$h\in G,h_1+\ldots+h_m=h, h_i\in A\wedge \varepsilon_i\in\{1,-1\} \forall i\leq m,\sum_{i=1}^m \varepsilon_i=n\}$$ and $$d_2=\min\{\bar{d}(a_1,b_1)+\ldots+\bar{d}(a_m,b_m):n\cdot z=a_1+\ldots+a_m,$$ $$g=b_1+\ldots+b_m, (a_i,b_i)\in B\cup\{(\varepsilon\cdot z,\varepsilon\cdot a),(\varepsilon\cdot a,\varepsilon\cdot z):a\in A,\varepsilon\in\{-1,1\}\}\}$$ are the same.\\

First we show that $d_1\geq d_2$. Suppose that for some $m\in \Nat\cup\{0\},h\in G,h_1+\ldots+h_m=h$, $h_i\in A$ and $\varepsilon_i\in\{1,-1\}$ for $i\leq m$, and $\sum_{i=1}^m \varepsilon_i=n$, we have $d_1=d(g,h)+f(\varepsilon_1\cdot h_1)+\ldots+f(\varepsilon_m\cdot h_m)$. By \eqref{item1} and \eqref{item2} for any $a\in A$ and any $h,h'\in G$, we have $f(a)=\bar{d}(z,a)$ and $d(h,h')=\bar{d}(h,h')$.  Also by definition, $d(g,h)$ can be written as a sum $d(a_1,b_1)+\ldots+d(a_j,b_j)=\bar{d}(a_1,b_1)+\ldots+\bar{d}(a_j,b_j)$, where $a_1+\ldots+a_j=g$, $b_1+\ldots+b_j=h$ and $(a_i,b_i)\in B$ for all $i\leq j$. It follows that $d_1$ can be written as $\bar{d}(a_1,b_1)+\ldots+\bar{d}(a_j,b_j)+\bar{d}(z,\varepsilon_1\cdot h_1)+\ldots+\bar{d}(z,\varepsilon_m\cdot h_m)$ which is by definition greater or equal to $d_2$.\\ 

On the other hand, consider a sum of the form $\bar{d}(a_1,b_1)+\ldots+\bar{d}(a_m,b_m)$, where $n\cdot z=a_1+\ldots+a_m$, $g=b_1+\ldots+b_m$ and $(a_i,b_i)\in B\cup\{(\varepsilon\cdot z,\varepsilon\cdot a),(\varepsilon\cdot a,\varepsilon\cdot z):a\in A,\varepsilon\in\{-1,1\}\}$. We can split the set $\{1,\ldots,m\}$ into three disjoint subsets $I_1$, $I_2$ and $I_3$, where 
\begin{itemize}
\item $(a_i,b_i)$ is of the form $(\varepsilon_i\cdot z,h_i)$ for some $h_i\in A$ and $\varepsilon_i\in \{-1,1\}$, when $i\in I_1$,
\item $(a_i,b_i)$ is of the form $(a,\varepsilon\cdot z)$ for some $a\in A$ and $\varepsilon\in \{-1,1\}$, when $i\in I_2$,
\item and $(a_i,b_i)\in B$ when $i\in I_3$.

\end{itemize}
Suppose for a moment that the set $I_2$ is empty. We show that it is possible to rewrite the sum above in the form $d(g,h)+f(\varepsilon_1\cdot h_1)+\ldots+f(\varepsilon_k\cdot h_k)$ for some appropriate values and thus $d_2\geq d_1$. Since $\sum_{i\leq m} a_i=n\cdot z$ and also $\sum_{i\in I_1} a_i=n\cdot z$, we get that $\sum_{i\in I_3} a_i=0_G$. Denote by $h$ the sum $\sum_{i\in I_1} b_i$ and also by $s$ the sum $\sum_{i\in I_3} b_i$. We have $h+s=g$, thus $s=g-h$. Since $\sum_{i\in I_3} \bar{d}(a_i,b_i)=d(0,g-h)=d(g,h)$ and for each $i\in I_1$ we have $\bar{d}(a_i,b_i)=f(\varepsilon_i\cdot h_i)$, we indeed get that $\sum_{i\leq m} \bar{d}(a_i,b_i)=d(g,h)+\sum_{i\in I_1} f(\varepsilon_i\cdot h_i)$.

We now show that we may suppose that $I_2$ is empty. Since $b_1+\ldots+b_m=g\in G$ we must have that $\sum_{i\in I_2} b_i=0$. Consider $\bar{a}=\sum_{i\in I_2} a_i\in G$. By \eqref{item1}, $\sum_{i\in I_2} \bar{d}(a_i,b_i)\geq d(\bar{a},0)$. Since there exist $(a'_1,b'_1),\ldots,(a'_l,b'_l)\in B$ such that $a'_1+\ldots+a'_l=\bar{a}$, $b'_1+\ldots+b'_l=0$ and $d(\bar{a},0)=d(a'_1,b'_1)+\ldots+d(a'_l,b'_l)$, we can replace the subsequence $\{(a_i,b_i):i\in I_2\}$ by $(a'_1,b'_1),\ldots,(a'_l,b'_l)$ and we are done.\\

This finishes the proof of the proposition.
\end{proof}
The following proposition is the analog of Theorem 2.12 in \cite{Nie1}. It will be crucial later in the embedding construction.
\begin{prop}\label{sums_dense}
Let $(G,d_G)$ be an abelian group with invariant metric $d_G$ which is of density $\kappa$. Then there exists an abelian supergroup $(H,d_H)$ with invariant metric which is of density $\kappa\times \aleph_0$ such that $d_H\upharpoonright G=d_G\upharpoonright G$ and $H$ has a dense subgroup of the form $\bigoplus_{\kappa\times \aleph_0} \Int$, i.e. the free abelian group of $\kappa\times \aleph_0$-many generators.
\end{prop}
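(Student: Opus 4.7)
The plan is to enlarge $G$ by adjoining, for each element of a fixed dense subset of $G$, an infinite sequence of new integer generators converging to that element, using repeated applications of Proposition~\ref{Katetov_realiz}.

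First, I fix a dense set $D=\{g_\alpha:\alpha<\kappa\}\subseteq G$ and an enumeration $\{(\alpha_\xi,n_\xi):\xi<\lambda\}$ of $\kappa\times\Nat$, where $\lambda$ is an ordinal of cardinality $\kappa\cdot\aleph_0$. By transfinite recursion on $\xi\le\lambda$ I build an increasing chain of abelian groups with invariant metric $(G_\xi,d_\xi)$, starting from $(G_0,d_0)=(G,d_G)$, satisfying $d_\eta\upharpoonright G_\xi^2=d_\xi$ whenever $\xi\le\eta$. At a successor stage $\xi+1$ I apply Proposition~\ref{Katetov_realiz} to $(G_\xi,d_\xi)$ with finite set $A=\{g_{\alpha_\xi}\}$ and the Kat\v etov function $f(g_{\alpha_\xi})=1/n_\xi$ (vacuously Kat\v etov on a singleton); this yields $G_{\xi+1}=G_\xi\oplus\Int z_\xi$ with an invariant extension $d_{\xi+1}$ so that $d_{\xi+1}(z_\xi,g_{\alpha_\xi})=1/n_\xi$. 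At a limit ordinal $\xi$ I set $G_\xi=\bigcup_{\eta<\xi}G_\eta$, equipped with the union of the metrics, which is automatically a well-defined invariant metric extending each $d_\eta$.

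Setting $H:=G_\lambda$ and $d_H:=d_\lambda$, the group $H$ is an abelian supergroup of $G$ with an invariant metric extending $d_G$, and the subgroup $F:=\bigoplus_{\xi<\lambda}\Int z_\xi\le H$ is free abelian of rank $\kappa\cdot\aleph_0$. For density, any $h\in H$ is of the form $g+w$ with $g\in G$ and $w\in F$; given $\varepsilon>0$, choose $\alpha<\kappa$ with $d_G(g,g_\alpha)<\varepsilon/2$ and $n$ with $1/n<\varepsilon/2$, and let $\xi$ be the index with $(\alpha_\xi,n_\xi)=(\alpha,n)$. Then $d_H(z_\xi,g)\le d_H(z_\xi,g_\alpha)+d_G(g_\alpha,g)<\varepsilon$, so $w+z_\xi\in F$ approximates $h$ to within $\varepsilon$. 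Hence $F$ is dense in $H$, and the density character of $H$ coincides with that of $F$, namely $\kappa\cdot\aleph_0$.

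The main technical care is needed at the limit stages of the recursion: one must verify that the direct limit of a chain of abelian groups with compatible invariant metrics is genuinely a group with a well-defined invariant metric (so that further applications of Proposition~\ref{Katetov_realiz} at the next successor stage are legitimate). This is standard bookkeeping, and once it is in place, the construction is a transparent transfinite iteration of the previous proposition.
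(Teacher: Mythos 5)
Your proposal is correct and follows essentially the same route as the paper: iterate Proposition~\ref{Katetov_realiz} $\kappa\times\aleph_0$ times, adjoining for each $(g_\alpha,n)$ a new generator $z$ realizing the singleton Kat\v etov function with value $1/n$, and observing that the resulting free summand is dense. Your extra attention to the limit stages and the vacuous Kat\v etov condition on singletons is fine bookkeeping that the paper leaves implicit.
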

\begin{proof}
Let $G'\leq G$ be a dense subgroup of size $\kappa$. In the following, we will not distinguish between cardinals and the corresponding ordinals, i.e. the least ordinals of that cardinality.

Enumerate $G'$ as $\{g_\alpha:\alpha<\kappa\}$. Using Proposition \ref{Katetov_realiz}  and transfinite induction of length $\kappa\times \aleph_0$ we produce a group $G\oplus (\bigoplus_{\alpha<\kappa\times \aleph_0} \Int_\alpha)$ where for each $\alpha<\kappa\times \aleph_0$, $\Int_\alpha$ is a copy of $\Int$ generated by $z_\alpha$ and for every $\beta<\kappa$ and every $m\in \Nat$ there is $\alpha$ such that $z_\alpha$ realizes the Kat\v etov function $f_\beta^m$ defined on $\{g_\beta\}$ such that $f_\beta^m(g_\beta)=1/m$. It follows that $\bigoplus_{\alpha<\kappa\times \aleph_0} \Int_\alpha$ is a dense subgroup of cardinality $\kappa\times \aleph_0$ in $G\oplus (\bigoplus_{\alpha<\kappa\times \aleph_0} \Int_\alpha)=H$ and we are done.
\end{proof}
We are now ready to construct the group $\mathbb{G}$. The group is constructed as the metric completion of a certain (generalized) Fra\" iss\' e limit. Thus we start by defining the class which we then show to be Fra\" iss\' e.
\begin{defin}[$\sqsubseteq$-Fra\" iss\' e class of finite direct sums of $\Int$ with finitely generated metrics]
Elements of the class $\mathcal{G}$ are finite direct sums (direct products equivalently) of $\Int$, i.e. groups of the form $\Int^n$ equipped with finitely generated rational metrics.

We define the proper subset of morphisms, resp. embeddings $\mathcal{M}$ on $\mathcal{K}$.
For $G,H\in \mathcal{G}$, the map $f:G\rightarrow H$ belongs to $\mathcal{M}$ if $f$ is an isometric group monomorphism that sends the algebraic free generators of $G$ to the algebraic free generators of $H$. In particular, $G$ is algebraically a direct summand of $H$, i.e. there exists some $G',F\in \mathcal{G}$ such that $H=G'\oplus F$ and moreover, $G$ is isometrically isomorphic to $G'$.
\end{defin}
\begin{prop}\label{isFraisse}
$\mathcal{G}$ is a $\sqsubseteq$-Fra\" iss\' e class.
\end{prop}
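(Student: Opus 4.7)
The plan is to verify the four requirements for $(\mathcal{G},\sqsubseteq)$ to be a $\sqsubseteq$-Fra\"iss\'e class. Countability of $\mathcal{G}$ up to isomorphism is immediate, since each element is determined by a dimension $n$ together with a finite generating set of pairs in $(\Int^n)^2$ carrying rational distances. The $\sqsubseteq$-hereditary property is essentially tautological: by the very definition of $\sqsubseteq$, if $G \sqsubseteq H$ then $G$ is isomorphic to an element of $\mathcal{G}$ witnessing the direct-summand decomposition $H = G' \oplus F$, so nothing is to be checked.

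Joint embedding is the special case of amalgamation over the trivial group, so I would concentrate on the amalgamation property. Given $A,B,C \in \mathcal{G}$ with $A \sqsubseteq B$ and $A \sqsubseteq C$, I would identify the images of $A$ and write $B = A \oplus B_0$ and $C = A \oplus C_0$ in $\mathcal{G}$, with finite generating sets $A_B$ and $A_C$ sharing a common subset $A_A \subseteq A^2$ that generates the metric on $A$. The natural candidate for the amalgam is $D := A \oplus B_0 \oplus C_0$ with proposed generating set $A_D := A_B \cup A_C$ viewed in $D^2$; on a pair from $A_A$ the prescribed distance is the common one.

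The technical claim is that $A_D$ is consistent in the sense of Remark \ref{defin_remark}, and that the resulting finitely generated rational metric admits the amalgamated-product formula
$$d_D(\mu + \nu + \rho, 0) \;=\; \min_{\mu = \mu_1 + \mu_2}\bigl[\,d_B(\mu_1 + \nu, 0) + d_C(\mu_2 + \rho, 0)\,\bigr]$$
for $\mu \in A$, $\nu \in B_0$, $\rho \in C_0$, the minimum taken over splittings of $\mu$ in $A$. The direction $d_D \le \mathrm{RHS}$ is straightforward: decompositions witnessing the two summands in $A_B$ and $A_C$ respectively concatenate to a decomposition in $A_D$. From the formula, specializing to $\rho = 0$ and taking $\mu_1 = \mu$, $\mu_2 = 0$ (and symmetrically for $C$), one sees that $d_D$ restricts to $d_B$ on $B$ and to $d_C$ on $C$; consequently $D$ decomposes in $\mathcal{G}$ both as $B \oplus C_0$ and as $C \oplus B_0$, which produces the required $\sqsubseteq$-embeddings $\psi_B,\psi_C$, and they agree on $A$ by construction.

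The main obstacle will be the reverse inequality $d_D \ge \mathrm{RHS}$, which is simultaneously the consistency check. Given an arbitrary decomposition $z = \sum x_i$, $0 = \sum y_i$ with $(x_i,y_i)\in A_D$, I would separate pairs into those from $A_B$ and those from $A_C$; the zero-sum condition combined with the splittings $B = A \oplus B_0$ and $C = A \oplus C_0$ forces both the $A_B$-sum and the $A_C$-sum of the $y_i$'s to lie in $A$ and to be negatives of one another, say $\pm w$ with $w \in A$. The $A_B$-portion of the decomposition then witnesses $d_B(\mu_B + \nu - w, 0)$ and the $A_C$-portion witnesses $d_C(\mu_C + \rho + w, 0)$, where $\mu_B + \mu_C = \mu$, so the split $\mu_1 := \mu_B - w$, $\mu_2 := \mu_C + w$ exactly recovers the total cost of the decomposition as the right-hand side of the formula. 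The crucial ingredient throughout is that $w \in A$ has the same distance in $A$, $B$, and $C$ (because $A$ is an isometric direct summand of both), which is what makes the bookkeeping across the two sides compatible.
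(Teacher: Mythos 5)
Your proposal is correct and follows essentially the same route as the paper: the amalgam is $A\oplus B_0\oplus C_0$ with metric generated by $A_B\cup A_C$, and the heart of the matter is splitting an arbitrary decomposition into its $A_B$-part and $A_C$-part and observing that the cross-over sums land in $A$ (the paper's $G_1\cap G_2=G_0$ observation), so that the isometric embedding of $A$ into both $B$ and $C$ makes the costs match. Your explicit amalgamated-product formula for $d_D$ is just a slightly more packaged statement of the same computation, and your remark that the reverse inequality simultaneously delivers the consistency requirement of Remark \ref{defin_remark} is exactly how the paper's verification functions.
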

\begin{proof}
It is clear that $\mathcal{G}$ is countable, resp. it contains only countably many isomorphism types.

The $\sqsubseteq$-joint embedding property is a special case of the $\sqsubseteq$-amalgamation property which we will prove.

Suppose we are given $G_0,G_1,G_2\in \mathcal{G}$ such that there is an isometric homomorphism $\iota_i:G_0\hookrightarrow G_i$ for $i\in\{1,2\}$. To simplify the notation we will suppose that $G_i=G_0\oplus F_i$, for some $F_i\in \mathcal{G}$ and for $i\in\{1,2\}$; thus $\iota_1,\iota_2$ are just (isometric) inclusions. We define $G_3$ as $G_0\oplus F_1\oplus F_2$, $\rho_i:G_i\hookrightarrow G_3$, for  $i\in\{1,2\}$, is again just an inclusion. Suppose that $A_i$ generates the metric $d_i$ on $G_i$,  $i\in\{1,2\}$. We set $A_3=A_1\cup A_2$. For each pair $(a,b)\in A_3$ we define $$d'(a,b)=\begin{cases} d_1(a,b) & \text{if }(a,b)\in A_1,\\
d_2(a,b) & \text{if }(a,b)\in A_2.\end{cases}$$ Note that in case $(a,b)\in A_1\cap A_2$ we have that $(a,b)\in G_0^2$, thus $d_1(a,b)=d_2(a,b)$, and so there is no ambiguity in the definition.

Then for every $a,b\in G_3$ we set $$d_3(a,b)=\min\{d'(a_1,b_1)+\ldots+d'(a_n,b_n):$$ $$a=a_1+\ldots+a_n,b=b_1+\ldots+b_n, (a_i,b_i)\in A_3 \forall i\leq n\}.$$ What remains to prove is that the inclusions $\rho_1,\rho_2$ are indeed isometric.

We prove it for $\rho_1$. Let $a,b\in G_1$ be arbitrary, we must prove that $d_3(a,b)=d_1(a,b)$. The inequality $d_3(a,b)\leq d_1(a,b)$ is clear since if $d_1(a,b)=d_1(a_1,b_1)+\ldots+d_1(a_n,b_n)$ where $(a_i,b_i)\in A_1$ for all $i\leq n$, then also $(a_i,b_i)\in A_3$ and $d'(a_i,b_i)=d_1(a_i,b_i)$ for all $i\leq n$. Thus we have to prove the reverse inequality.

Suppose that $d_3(a,b)=d'(a_1,b_1)+\ldots+d'(a_n,b_n)$, where  $(a_j,b_j)\in A_3$ for all $j\leq n$. Since the group addition is commutative we can assume without loss of generality that there is $1\leq m\leq n$ such that for every $j\leq m$ we have $(a_j,b_j)\in A_2$ and for every $m<j\leq n$ we have $(a_j,b_j)\in A_1$.
 
We claim that $$a_1+\ldots+ a_m,b_1+\ldots+ b_m\in G_0.$$ To see this, observe that clearly $a_1+\ldots+ a_m,b_1+\ldots+ b_m\in G_2$. We also have $a_1+\ldots+ a_m,b_1+\ldots+ b_m\in G_1$. This holds since $a_{m+1}+\ldots+ a_n,b_{m+1}+\ldots+ b_n\in G_1$ and $a=a_1+\ldots+a_n,b=b_1+\ldots+b_n\in G_1$ and obviously $a_1+\ldots+ a_m=a-(a_{m+1}+\ldots+ a_n)$ and $b_1+\ldots+ b_m=b-(b_{m+1}+\ldots+ b_n)$. However, since $G_1\cap G_2=G_0$ the claim follows.

We then have $$d_3(a,b)=d_3(a_1+\ldots+ a_m,b_1+\ldots+ b_m)+$$ $$d_3(a_{m+1}+\ldots+ a_n,b_{m+1}+\ldots+ b_n).$$ By the arguments above and since the inclusions $\iota_1,\iota_2$ are isometries we have that $d_2(a_1+\ldots+ a_m,b_1+\ldots+ b_m)=d_0(a_1+\ldots+ a_m,b_1+\ldots+ b_m)=d_1(a_1+\ldots+ a_m,b_1+\ldots+ b_m)$ and thus $$d_3(a,b)=d_0(a_1+\ldots+ a_m,b_1+\ldots+ b_m)+$$ $$d_1(a_{m+1}+\ldots+ a_n,b_{m+1}+\ldots+ b_n)=d_1(a_1+\ldots+ a_m,b_1+\ldots+ b_m)+$$ $$d_1(a_{m+1}+\ldots+ a_n,b_{m+1}+\ldots+ b_n)=d_1(a,b)$$ and we are done. This finishes the proof of the lemma.
\end{proof}
It follows that $\mathcal{G}$ has a limit, denoted as $G$, which is a countable abelian group equipped with an invariant metric $d$. Algebraically, it is just $\bigoplus_{n\in \Nat} \Int$, the infinite direct sum of countably many copies of $\Int$, in other words, the free abelian group of countably many generators. It follows that the group operations on $G$ are continuous with respect to the topology induced by the metric.

The following property of $G$ implied by the Fra\" iss\' e theorem characterizes $G$ uniquely up to an isometric isomorphism.
\begin{fact}\label{charac_G}
Let $F$ be a finitely generated subgroup of $G$ such that $F\sqsubseteq G$. Let $H\in \mathcal{G}$ be such that there is an isometric homomorphism $\iota: F\hookrightarrow H$ such that $\iota[F]\sqsubseteq H$. Then there exists an isometric homomorphism $\rho:H\hookrightarrow G$ such that $\rho\circ \iota=\mathrm{id}_F$ and $\rho[H]\sqsubseteq G$.
\end{fact}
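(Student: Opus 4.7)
The plan is to deduce this characterization of $G$ from the (generalized) Fra\" iss\' e Theorem \ref{fraissethm} by the standard amalgamation-plus-ultrahomogeneity recipe. First a preliminary remark: the conclusion would fail for an arbitrary isometric homomorphism $\iota$ (e.g.\ doubling $\Int$ with a suitably scaled rational metric into $\Int$ gives an isometric embedding whose image is not a direct summand, and then no homomorphism $H\to G$ can undo this non-divisibility inside the torsion-free $G$). So the hypothesis must be read as asserting that $\iota:F\hookrightarrow H$ is a $\sqsubseteq$-embedding, i.e.\ that $\iota(F)$ is a direct summand of $H$.

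Granting this, write $G=\bigcup_n G_n$ as an increasing chain $G_0\sqsubseteq G_1\sqsubseteq\cdots$ of members of $\mathcal{G}$ as in the construction of the limit; since $F\sqsubseteq G$ is finitely generated there is some $n_0$ with $F\sqsubseteq G_{n_0}$. Apply the $\sqsubseteq$-amalgamation property to the span $G_{n_0}\sqsupseteq F\stackrel{\iota}{\hookrightarrow} H$ to obtain some $D\in\mathcal{G}$ together with $\sqsubseteq$-embeddings $\alpha:G_{n_0}\hookrightarrow D$ and $\beta:H\hookrightarrow D$ satisfying $\alpha\upharpoonright F=\beta\circ\iota$. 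Because $\sqsubseteq$-$\mathrm{Age}(G)=\mathcal{G}$, some $\sqsubseteq$-embedding $\gamma:D\hookrightarrow G$ exists.

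Now $\gamma\circ\alpha(G_{n_0})$ and $G_{n_0}$ are two finitely generated $\sqsubseteq$-substructures of $G$, and $(\gamma\circ\alpha)^{-1}$ is a partial isomorphism between them. By the $\sqsubseteq$-ultrahomogeneity of $G$ it extends to an automorphism $\phi$ of $G$, which by construction satisfies $\phi\circ\gamma\circ\alpha=\mathrm{id}_{G_{n_0}}$. Setting $\rho:=\phi\circ\gamma\circ\beta:H\hookrightarrow G$ then yields an isometric homomorphism with $\rho\circ\iota=\phi\circ\gamma\circ\beta\circ\iota=\phi\circ\gamma\circ\alpha\upharpoonright F=\mathrm{id}_F$, as desired.

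The only genuine subtlety is the interpretive point at the start (confirming that $\iota$ qualifies as a $\sqsubseteq$-embedding so that amalgamation applies); once this is settled, the rest is routine Fra\" iss\' e bookkeeping that uses nothing specific to the class $\mathcal{G}$ of finitely generated metric abelian groups beyond the already-verified amalgamation and the ultrahomogeneity given by Theorem \ref{fraissethm}.
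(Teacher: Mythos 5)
Your proof is correct and is exactly the derivation the paper intends: the paper states this Fact without proof as something ``implied by the Fra\"iss\'e theorem,'' and your amalgamation-then-ultrahomogeneity argument (amalgamate $H$ with a chain member $G_{n_0}\sqsupseteq F$ over $F$, $\sqsubseteq$-embed the amalgam into $G$, then correct by an automorphism fixing $G_{n_0}$) is the standard way to extract this extension property from Theorem~\ref{fraissethm}. Your interpretive caveat --- that $\iota$ must be a $\sqsubseteq$-embedding, i.e.\ $\iota(F)$ a direct summand of $H$, since otherwise divisibility obstructions in the torsion-free $G$ defeat the conclusion --- is also well taken and consistent with every application of the Fact in the paper, where $\iota$ is always an inclusion onto a direct summand.
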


Consider the metric completion, denoted by $\mathbb{G}$, of $G$. It is a separable complete metric space and the group operations extend to the completion. It follows that $\mathbb{G}$ is a Polish abelian group equipped with an invariant metric. We refer the reader to \cite{Gao} for an exposition on Polish (metric) groups.\\
\begin{prop}\label{is_Ury}
$\mathbb{G}$ is isometric to the Urysohn universal space.
\end{prop}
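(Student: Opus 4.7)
The plan is to apply the standard characterization of the Urysohn space: a complete separable metric space $U$ is isometric to the Urysohn universal space if and only if for every finite $F\subseteq U$ and every Kat\v etov function $f\colon F\to \Rea^+$ there exists $z\in U$ with $d(z,a)=f(a)$ for all $a\in F$. Since $\mathbb{G}$ is already complete, separable, and invariantly metrized, and since an exact realizer can be constructed from a sequence of approximate realizers via the standard Cauchy iteration that exploits completeness of $\mathbb{G}$, it suffices to prove the following approximate realization property: for every finite $F\subseteq \mathbb{G}$, every Kat\v etov $f\colon F\to \Rea^+$, and every $\varepsilon>0$, there is $z\in \mathbb{G}$ with $|d(z,a)-f(a)|<\varepsilon$ for all $a\in F$.

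Fix such $F$, $f$, and $\varepsilon$, and choose a much smaller $\delta>0$. I first reduce to rational data inside the countable dense subgroup $G$: using density, for each $a\in F$ pick $a'\in G$ with $d_{\mathbb{G}}(a,a')<\delta$, and set $F'=\{a':a\in F\}$. Extend $f$ to a Kat\v etov function $\tilde f$ on all of $\mathbb{G}$ by the standard formula $\tilde f(x)=\min_{a\in F}(f(a)+d_{\mathbb{G}}(x,a))$, restrict it to $F'$, and then perturb the values by a small rational amount to obtain a rational-valued Kat\v etov function $f'\colon F'\to \Rat^+$ satisfying $|f'(a')-f(a)|<2\delta$ for each corresponding pair.

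Since $G$ is the Fra\" iss\' e limit of $\mathcal{G}$, it is a union of a $\sqsubseteq$-chain $B_1\sqsubseteq B_2\sqsubseteq\ldots$ with each $B_n\in \mathcal{G}$, so the finite set $F'$ lies inside some $B_n$ which is a $\sqsubseteq$-summand of $G$. Proposition \ref{Katetov_realiz} applied to $B_n$ and $f'$ produces a group $B_n\oplus \Int\in \mathcal{G}$ whose new generator $z$ realizes $f'$ on $F'$; since $B_n\sqsubseteq B_n\oplus \Int$, Fact \ref{charac_G} supplies an isometric homomorphism $\rho\colon B_n\oplus \Int\hookrightarrow G$ extending the inclusion $B_n\hookrightarrow G$. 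The image $\rho(z)\in G\subseteq \mathbb{G}$ then satisfies $d(\rho(z),a')=f'(a')$ for all $a'\in F'$, whence $|d(\rho(z),a)-f(a)|<3\delta<\varepsilon$ for all $a\in F$, as required. The main point where care is needed is the transition from real-valued Kat\v etov data on $F\subseteq \mathbb{G}$ to rational Kat\v etov data on the perturbed finite set $F'\subseteq G$: the universal Kat\v etov extension formula handles the change of basepoints cleanly, and a sufficiently small uniform positive shift of the values before rounding ensures that the two Kat\v etov inequalities on $F'$ survive the passage to rationals.
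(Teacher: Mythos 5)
Your proposal is correct and, at its core, is the same argument as the paper's: in both cases the essential step is to realize a rational Kat\v etov function on a finite subset of the countable dense subgroup $G$ by first locating that subset in some $F\sqsubseteq G$, applying Proposition \ref{Katetov_realiz}, and then mapping $F\oplus\Int$ back into $G$ over $F$ via Fact \ref{charac_G}; the only difference is packaging, since the paper verifies the exact rational one-point extension property and identifies $G$ with the rational Urysohn space (Fact \ref{char_Urys}), while you verify the equivalent approximate extension property of the completion $\mathbb{G}$ directly. One small caution on your rationalization step: independently rounding the (uniformly shifted) values can break the $1$-Lipschitz half of the Kat\v etov condition when it is tight, so after choosing rationals $q_{a'}\in[\tilde f(a'),\tilde f(a')+\eta]$ you should pass to $f'(x)=\min_{a'\in F'}\bigl(q_{a'}+d(x,a')\bigr)$, which is rational-valued, Kat\v etov, and within $\eta$ of $\tilde f$ on $F'$.
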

To prove it, it suffices to prove that $G$, the countable dense subgroup of $\mathbb{G}$, is isometric to the rational Urysohn space. The following well known fact characterizes the rational Urysohn space.
\begin{fact}\label{char_Urys}
Let $(X,d)$ be a countable metric space with rational metric. Then it is isometric with the rational Urysohn space iff for every finite subset $A\subseteq X$ and for every rational Kat\v etov function $f:A\rightarrow \Rat^+$, where $\Rat^+$ denotes the set of positive rationals, there exists $x\in X$ such that for every $a\in A$, $(d(a,x)=f(a))$.
\end{fact}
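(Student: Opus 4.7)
My plan is to prove both implications of the equivalence, with the substantive content being a back-and-forth construction of an isometry.

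For the $(\Rightarrow)$ direction I invoke the working definition of the rational Urysohn space $\mathbb{U}_\mathbb{Q}$ as the (unique up to isometry) countable rational metric space that is ultrahomogeneous and universal for finite rational metric spaces, i.e.\ the Fra\"iss\'e limit of the class of finite rational metric spaces. Given finite $A\subseteq \mathbb{U}_\mathbb{Q}$ and a rational Kat\v etov function $f:A\to \mathbb{Q}^+$, I form the finite rational metric space $A\cup\{*\}$ by declaring $d(*,a):=f(a)$ for $a\in A$; the two Kat\v etov inequalities say exactly that this one-point extension is a metric with rational distances. By universality it embeds isometrically into $\mathbb{U}_\mathbb{Q}$, and ultrahomogeneity lets me precompose with an automorphism of $\mathbb{U}_\mathbb{Q}$ so that the embedding is the identity on $A$; the image of $*$ is then the required point realizing $f$.

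For the $(\Leftarrow)$ direction I perform a standard back-and-forth. Fix enumerations $X=\{x_n:n\in \Nat\}$ and $\mathbb{U}_\mathbb{Q}=\{u_n:n\in \Nat\}$, and build a chain of finite partial isometries $\emptyset=\phi_0\subseteq\phi_1\subseteq\ldots$ from $X$ into $\mathbb{U}_\mathbb{Q}$ arranged so that $x_n\in \mathrm{dom}(\phi_{2n+1})$ and $u_n\in \mathrm{ran}(\phi_{2n+2})$. To extend $\phi_n:A\to B$ by one point on the $X$-side, I pick the least unused $x\in X\setminus A$ and define $g:B\to\mathbb{Q}^+$ by $g(\phi_n(a)):=d_X(x,a)$. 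Since $\phi_n$ is an isometry, the function $g$ is Kat\v etov on $B$: for $a,a'\in A$ both
\[
|g(\phi_n(a))-g(\phi_n(a'))|=|d_X(x,a)-d_X(x,a')|\leq d_X(a,a')=d(\phi_n(a),\phi_n(a'))
\]
and
\[
g(\phi_n(a))+g(\phi_n(a'))=d_X(x,a)+d_X(x,a')\geq d_X(a,a')=d(\phi_n(a),\phi_n(a'))
\]
hold by the triangle inequality in $X$. The Kat\v etov-realization property of $\mathbb{U}_\mathbb{Q}$ (proved in the forward direction) supplies $u\in \mathbb{U}_\mathbb{Q}$ with $d(u,\phi_n(a))=g(\phi_n(a))=d_X(x,a)$ for every $a\in A$, so $\phi_{n+1}:=\phi_n\cup\{(x,u)\}$ is again a partial isometry. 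The symmetric step, forcing a target point $u_n$ into the range, uses the hypothesized Kat\v etov-realization property of $X$ in exactly the same way. The union $\phi:=\bigcup_n \phi_n$ is then an isometric bijection $X\to \mathbb{U}_\mathbb{Q}$.

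The only place where anything nontrivial happens is the verification that the function $g$ built at each back-and-forth step is Kat\v etov, which as shown above is immediate from the isometry property of $\phi_n$ and the triangle inequality in the ambient space. Everything else is routine bookkeeping to ensure that each $x_n$ eventually enters the domain and each $u_n$ eventually enters the range.
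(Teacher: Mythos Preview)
The paper does not prove this statement; it is recorded as a ``well known fact'' and used without argument. Your back-and-forth proof is correct and is exactly the standard argument one gives for this characterization: the forward direction follows from universality plus ultrahomogeneity of the Fra\"iss\'e limit of finite rational metric spaces, and the backward direction is the usual back-and-forth between two countable structures each enjoying the one-point extension (Kat\v etov realization) property. One small point you might make explicit for completeness is that the realized point $u$ at each forth step is automatically new (i.e.\ $u\notin B$), since $x\notin A$ forces $g(b)=d_X(x,\phi_n^{-1}(b))>0$ for every $b\in B$; the symmetric observation handles injectivity on the back step.
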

So in order to prove Proposition \ref{is_Ury} it is sufficient to prove the following lemma. 
\begin{lem}\label{Kat_ext}
$(G,d)$ satisfies the condition from Fact \ref{char_Urys}.
\end{lem}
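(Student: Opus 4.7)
The plan is to combine the Fra\"iss\'e-limit characterization of $G$ given by Fact \ref{charac_G} with the Kat\v etov realization Proposition \ref{Katetov_realiz}. Fix a finite set $A\subseteq G$ and a rational Kat\v etov function $f:A\to\mathbb{Q}^+$. Since $G$ is the Fra\"iss\'e limit of $\mathcal{G}$, it is the union of some $\sqsubseteq$-chain $B_1\sqsubseteq B_2\sqsubseteq\cdots$ with $B_k\in\mathcal{G}$, so one picks $n$ large enough that $A\subseteq B_n$; write $F:=B_n$. By construction $F\sqsubseteq G$ and $F$ carries a rational finitely generated invariant metric inherited from $G$.

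Next, I would apply Proposition \ref{Katetov_realiz} to $F$, to the finite subset $A\subseteq F$, and to the Kat\v etov function $f$. This produces a group $H:=F\oplus\Int\in\mathcal{G}$ equipped with a rational finitely generated invariant metric $\bar d$ that extends the metric of $F$, such that the generator $z$ of the new copy of $\Int$ satisfies $\bar d(z,a)=f(a)$ for every $a\in A$. The natural inclusion $\iota:F\hookrightarrow H$ presents $F$ as a direct summand of $H$, hence is an isometric $\sqsubseteq$-embedding in the sense of the class $\mathcal{G}$.

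Finally, feed $F\sqsubseteq G$, $H\in\mathcal{G}$ and the $\sqsubseteq$-embedding $\iota$ into Fact \ref{charac_G} to obtain an isometric homomorphism $\rho:H\hookrightarrow G$ with $\rho\circ\iota=\mathrm{id}_F$. Setting $x:=\rho(z)\in G$, for every $a\in A\subseteq F$ one gets
\[
d(a,x)=d(\rho(a),\rho(z))=\bar d(a,z)=f(a),
\]
which is exactly the Kat\v etov extension condition of Fact \ref{char_Urys}.

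The argument is essentially a short assembly of earlier results; no serious obstacle is expected. The only points worth verifying are that the output of Proposition \ref{Katetov_realiz} still lies in $\mathcal{G}$, i.e.\ that $\bar d$ remains rational and finitely generated, which is precisely the ``moreover'' clause of that proposition, and that the inclusion $F\hookrightarrow F\oplus\Int$ really is a $\sqsubseteq$-embedding, which is immediate from the definition of $\sqsubseteq$ as ``direct summand''.
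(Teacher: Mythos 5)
Your proposal is correct and follows essentially the same route as the paper: choose a finitely generated $F\sqsubseteq G$ containing $A$, realize $f$ in $F\oplus\Int$ via Proposition \ref{Katetov_realiz} (using its ``moreover'' clause to stay in $\mathcal{G}$), and pull the new generator back into $G$ by Fact \ref{charac_G}. The extra details you supply (extracting $F$ from the $\sqsubseteq$-chain, checking the inclusion is a $\sqsubseteq$-embedding) are correct and only make explicit what the paper leaves implicit.
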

\begin{proof}[Proof of the lemma.]
Let $A\subseteq G$ be a finite subset and let $f:A\rightarrow \Rat^+$ be a Kat\v etov function.  Let $F_A$ be a finitely generated subgroup containing $A$ such that $F_A\sqsubseteq G$. We use Proposition \ref{Katetov_realiz} to obtain a group $F_A\oplus \Int\in \mathcal{G}$, where we denote the generator of the new copy of $\Int$ as $z$, such that $z$ realizes the Kat\v etov function $f$. However, using Fact \ref{charac_G} we immediately get that there exists $g\in G$ realizing $f$ and we are done.
\end{proof}
It was already mentioned in the introduction that the Shkarin/Niemiec's group with its metric denoted here as $(G_\mathrm{SN},d_\mathrm{SN})$ and any Cameron-Vershik group with its metric (Cameron-Vershik's groups form a family of continuum many groups) denoted here as $(G_\mathrm{CV},d_\mathrm{CV})$ are non-periodic (having elements of infinite order) abelian groups isometric to the Urysohn universal space. It is still open whether Shkarin/Niemiec's group belongs to the Cameron-Vershik family of groups. We can prove the following proposition.
\begin{prop}
The group $(\mathbb{G},d)$, which is also a non-periodic abelian group isometric to the Urysohn universal space, is isometrically isomorphic neither to $G_\mathrm{SN}$ nor to $G_\mathrm{CV}$.
\end{prop}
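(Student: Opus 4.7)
The two non-isomorphisms will be treated via separate properties of $G_{\mathrm{CV}}$ and $G_{\mathrm{SN}}$ that I will argue fail for $\mathbb{G}$. For $G_{\mathrm{CV}}$ I would use that each Cameron--Vershik group is by construction \emph{monothetic}, i.e.\ contains a dense cyclic subgroup; so any isometric group isomorphism $\mathbb{G}\to G_{\mathrm{CV}}$ would make $\mathbb{G}$ monothetic as well, and it suffices to show $\mathbb{G}$ has no topological generator. For $G_{\mathrm{SN}}$ I would use that the Shkarin--Niemiec group is \emph{topologically universal} for separable abelian metrizable groups, so it contains a topologically embedded copy of $\mathbb{R}/\mathbb{Z}$, giving a nontrivial closed subgroup whose underlying topological group is compact; it then suffices to show $\mathbb{G}$ contains no nontrivial closed subgroup isomorphic (as a topological group) to a compact one---equivalently, no nonzero element $g\in\mathbb{G}$ with the norm sequence $\{d(ng)\}_{n\in\mathbb{Z}}$ bounded in a way that makes the orbit totally bounded.

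The central input for both negative properties is the Fra\"iss\'e structure of the dense subgroup $G$. Every $g\in G$ lies in a finitely generated $\sqsubseteq$-substructure $F\sqsubseteq G$ with $F\cong\mathbb{Z}^k$ carrying a finitely generated metric. In any such $F$ the cyclic subgroup $\langle g\rangle$ is discrete and the asymptotic rate $c(g):=\lim_n d(ng)/n$ (which exists by the fact cited after Theorem~\ref{fraissethm}) is a specific positive number for $g\neq 0$, computable from the finite generating set. Combined with the $\sqsubseteq$-direct-summand nature of the amalgamation (Proposition giving amalgamation for $\mathcal{G}$), this gives that in $G$ itself no element topologically generates any ``other'' summand, and no cyclic orbit is totally bounded.

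The main technical obstacle is passing from $G$ to the completion $\mathbb{G}$: a topological generator of $\mathbb{G}$ or a generator of a compact cyclic closure need not lie in $G$, and elements of $\mathbb{G}\setminus G$ arise as limits of Cauchy sequences whose linear-growth data could a priori degenerate. I would overcome this by using Proposition~\ref{Katetov_realiz} and Fact~\ref{charac_G} to produce, for any candidate $g\in\mathbb{G}$, a fresh element $h\in G$ realized as a new $\Int$-summand ``independent'' of a suitable $\sqsubseteq$-substructure approximating $g$: if $g$ were a topological generator one would derive $h\in\overline{\langle g\rangle}$, yet the independent-summand construction forces $h$ to sit at a prescribed positive distance from every integer multiple of an approximant of $g$, contradicting density. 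A parallel argument---exhibiting elements far from bounded orbits---rules out compact closed cyclic subgroups.

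\textbf{Main obstacle.} The delicate step is the quantitative ``independence'' claim for elements of $\mathbb{G}\setminus G$: one needs that for any Cauchy sequence $(g_n)\subseteq G$ converging to $g$, the Fra\"iss\'e extension property still provides elements in $G$ that remain at a controlled distance from all integer multiples $kg$ uniformly in $k$. Making this uniform control rigorous---essentially, tracking how growth rates and orbit diameters behave under the $\sqsubseteq$-amalgamation as one refines along Cauchy sequences---is where I expect the real work to lie, and the proof will likely proceed by a diagonal argument against a putative topological generator or compactly-generated element.
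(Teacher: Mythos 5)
Your proposal does not follow the paper's route, and both halves of your plan run into genuine trouble. The paper's proof is a short metric-invariant argument: every $x\in G_{\mathrm{SN}}$ and every $y\in G_{\mathrm{CV}}$ satisfies $\lim_{n}d_{\mathrm{SN}}(n\cdot x)/n=0$, resp.\ $\lim_n d_{\mathrm{CV}}(n\cdot y)/n=0$ (for $G_{\mathrm{SN}}$ this is explicit in Shkarin's and Niemiec's papers; for $G_{\mathrm{CV}}$ one first checks it for the dense-orbit generator, whose orbit is isometric to the rational Urysohn space so that $d_{\mathrm{CV}}(x,n\cdot x)<1$ for infinitely many $n$, and then everywhere by density). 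By contrast $G$ contains a $\sqsubseteq$-copy of $(\Int,|\cdot|)$, hence $\mathbb{G}$ has an element $g$ with $\lim_n d(n\cdot g)/n=1>0$. Since $x\mapsto\lim_n d(n\cdot x)/n$ is preserved by isometric isomorphisms, this finishes the proof. Note that the statement only concerns isometric isomorphisms, so such a purely metric invariant suffices.

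Your plan instead aims at topological invariants, and this is where it breaks. The $G_{\mathrm{SN}}$ half rests on the claim that $\mathbb{G}$ contains no nontrivial compact closed subgroup and no nonzero element with totally bounded orbit. This is false: by the paper's own main theorem $\mathbb{G}$ is metrically universal, so it contains a subgroup isometrically isomorphic to $\Rea/\Int$ with its invariant arc-length metric; that subgroup is compact, hence closed, and each of its nonzero elements has totally bounded orbit. Your observation that cyclic orbits in the countable Fra\"iss\'e limit $G$ have positive linear growth is correct for finitely generated metrics, but it simply does not pass to the completion --- the ``main obstacle'' you flag is not a technical difficulty to be overcome but an actual failure of the claim in $\mathbb{G}$. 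The $G_{\mathrm{CV}}$ half requires proving that $\mathbb{G}$ is not monothetic; you give no argument for this, it is a strictly stronger (topological) statement than what is needed, and nothing in the paper establishes it --- indeed the analogous question for $G_{\mathrm{SN}}$ is recorded there as open. So as it stands the proposal does not yield a proof.
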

\begin{proof}
The metrics $d_\mathrm{SN}$ and $d_\mathrm{CV}$ both share the property that for every $x\in G_\mathrm{SN}$ and every $y\in G_\mathrm{CV}$ we have $\lim_{n\to \infty} \frac{d_\mathrm{SN}(n\cdot x)}{n}=0$ and $\lim_{n\to \infty} \frac{d_\mathrm{CV}(n\cdot y)}{n}=0$. For the former, it is explicitly mentioned and proved in both \cite{Sk} and \cite{Nie1}. For the latter, let $x\in G_\mathrm{CV}$ be the element whose orbit $\{n\cdot x:n\in \Int\}$ is dense in $G_\mathrm{CV}$ and isometric to the rational Urysohn space (see \cite{CaVe} for details). It suffices to prove $\lim_{n\to \infty} \frac{d_\mathrm{SN}(n\cdot x)}{n}=0$. However, since the orbit is isometric to the rational Urysohn space the set $\{n\in \Nat: d_\mathrm{CV}(x,n\cdot x)<1\}$ must be infinite and the claim follows.

It is obvious that the group $(\mathbb{G},d)$ does not have this property, so the result follows.
\end{proof}
\section{The universality}
In this section we prove the main result that $\mathbb{G}$ is a metrically universal separable abelian group. Because of Proposition \ref{sums_dense} it is sufficient to prove that for every invariant metric $d$ on $\bigoplus_{n\in \Nat} \Int$ there exists an isometric homomorphism $\iota: (\bigoplus_{n\in \Nat} \Int,d)\hookrightarrow \mathbb{G}$. Indeed, let $(H,d)$ be an arbitrary separable metric abelian group. Using Proposition \ref{sums_dense} we obtain a supergroup $(H\oplus(\bigoplus_{n\in \Nat} \Int),\bar{d})$, where $\bar{d}$ extends $d$, such that $\bigoplus_{n\in \Nat} \Int$ is dense in $H\oplus(\bigoplus_{n\in \Nat} \Int)$. Then we find an isometric homomorphism $\iota: (\bigoplus_{n\in \Nat} \Int,\bar{d})\hookrightarrow \mathbb{G}$ and the closure $\overline{\iota[\bigoplus_{n\in \Nat} \Int]}$ will contain an isometric copy of $(H,d)$.
\begin{defin}
Consider the group $\Int^n$, for some $n\geq 1$, where we denote the generators by $z_1,\ldots,z_n$. For every $x\in\Int^n$ by $|x|$ we denote the value $|k_1|+\ldots+|k_n|$, where $x=k_1\cdot z_1+\ldots+k_n\cdot z_n$.
Let $d$ and $d'$ be invariant metrics on $\Int^n$. We define a distance between $d$ and $d'$ as follows: we set $$D(d,d')=\sup\{\frac{|d(x)-d'(x)|}{|x|}: x\in\Int^n\setminus\{0\}\}.$$ In the sequel, we shall often say that two metrics $d$ and $d'$ are $\varepsilon$-close to mean that $D(d,d')\leq \varepsilon$, for some $\varepsilon>0$.
\end{defin}

We shall use this notion even in situations where we have some copy of $\Int^n$ denoted as $S_1\oplus\ldots\oplus S_n$ generated by $s_1,\ldots,s_n$ with a metric $d_S$ and another copy of $\Int^n$ denoted as $T_1\oplus\ldots\oplus T_n$ generated by $t_1,\ldots,t_n$ with a metric $d_T$. In that case we say $(S_1\oplus\ldots\oplus S_n,d_S)$ and $(T_1\oplus\ldots\oplus T_n,d_T)$ are $\varepsilon$-close if they are $\varepsilon$-close under a canonical algebraic isomorphism sending $s_i$ to $t_i$, for every $i\leq n$, which will be always clear from the context.\\

Our goal now is to prove that for each $n\geq 1$ the space of invariant metrics on $\Int^n$ equipped with the distance $D$ is separable, and moreover, the finitely generated rational metrics form a countable dense subset.
\begin{prop}\label{goodapprox}
Let $\Int^n$, for $n\geq 1$, be equipped with an arbitrary invariant metric $d$. Then for any $\varepsilon>0$ there exists a finitely generated metric $d_F$ on $\Int^n$ such that $d$ and $d_F$ are $\varepsilon$-close.
\end{prop}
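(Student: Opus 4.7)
The plan is to construct $d_F$ as the finitely generated metric generated by the set $A := \{a \in \Int^n : |a_1| + \ldots + |a_n| \leq N\}$ for a sufficiently large $N$ depending on $\varepsilon$, with prescribed weights $d_F(a) := d(a)$ for each $a \in A$. Since $d$ is invariant and hence norm-subadditive ($d(a+b) \leq d(a)+d(b)$), the prescription is consistent in the sense of Remark \ref{defin_remark}, so $d_F$ is a well-defined finitely generated invariant metric. The same subadditivity gives immediately that $d_F(k) \geq d(k)$ for every $k \in \Int^n$, since any decomposition $k = a^{(1)} + \ldots + a^{(\ell)}$ into elements of $A$ satisfies $d(k) \leq \sum_i d(a^{(i)})$. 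Hence the whole task reduces to proving the upper bound $d_F(k) \leq d(k) + \varepsilon(|k_1|+\ldots+|k_n|)$.

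The key technical ingredient, which I expect to be the main obstacle, is a uniform version of Fekete's lemma. The fact cited in the preliminaries gives, for each $a \in \Int^n$, that $\hat d(a) := \lim_m d(m\cdot a)/m = \inf_m d(m\cdot a)/m$ exists. One checks that $\hat d$ is positively homogeneous and subadditive on $\Int^n$, so it extends to a seminorm on $\Rat^n$ and, by continuity, to $\Rea^n$, the continuity resting on the trivial bound $\hat d \leq d \leq C(|k_1|+\ldots+|k_n|)$ with $C = \max_i d(z_i)$. I claim that for every $\varepsilon' > 0$ there exists $N_0$ such that every $k \in \Int^n$ with $|k_1|+\ldots+|k_n| \geq N_0$ satisfies $d(k) \leq \hat d(k) + \varepsilon'(|k_1|+\ldots+|k_n|)$. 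This is a compactness argument on the $\ell_1$-unit sphere $S \subseteq \Rea^n$: cover $S$ by finitely many $\delta$-balls centered at rational points $\tilde x_i$ with common denominator $M$, apply pointwise Fekete to the integer vectors $M\tilde x_i$, and control rounding errors via the Lipschitz-type estimate $|d(k)-d(k')| \leq d(k-k') \leq C|k-k'|_1$. The delicate point is that $N_0$ must be uniform across all directions; the pointwise version of Fekete is not enough.

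Granting uniform Fekete, the upper bound on $d_F$ follows from an explicit decomposition. For $|k_1|+\ldots+|k_n| \leq N$ we have $k \in A$ and $d_F(k) = d(k)$. Otherwise, write $K = |k_1|+\ldots+|k_n|$, $m = \lceil K/N \rceil$, $a_i = \mathrm{sign}(k_i)\lfloor |k_i|/m \rfloor$, $a = (a_1,\ldots,a_n)$, and $r = k - m\cdot a$. Then $|a_1|+\ldots+|a_n| \leq K/m \leq N$, so $a \in A$; moreover $|r_1|+\ldots+|r_n| < nm$ and $|a_1|+\ldots+|a_n| \geq K/m - n \geq N/2 - n$ when $K \geq N$, which is $\geq N_0$ as soon as $N \geq 2(N_0 + n)$. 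Decomposing $r$ along the standard generators $\pm z_i \in A$ yields $d_F(r) \leq C(|r_1|+\ldots+|r_n|) \leq Cnm$. Combining with uniform Fekete applied to $a$ gives
\[
d_F(k) \leq m\cdot d(a) + d_F(r) \leq \hat d(m\cdot a) + \varepsilon' m(|a_1|+\ldots+|a_n|) + Cnm.
\]
Using $\hat d(m\cdot a) \leq d(m\cdot a) \leq d(k) + d(r) \leq d(k) + Cnm$ together with $m \leq 2K/N$, one obtains $d_F(k) \leq d(k) + 4CnK/N + 2\varepsilon' K$. Choosing $\varepsilon' = \varepsilon/4$ and $N \geq \max\{8Cn/\varepsilon, 2(N_0 + n)\}$ delivers the required estimate, finishing the proof.
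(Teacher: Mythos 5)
Your proposal is correct and rests on the same two pillars as the paper's proof: the Fekete-type fact from the preliminaries applied to finitely many bounded ``directions'' (your rational centers $M\tilde x_i$ on the $\ell_1$-sphere play exactly the role of the paper's vectors $\vec a^{\bar k}\in[-\bar k,\bar k]^n$), together with a generating set on which $d_F$ is forced to agree with $d$, so that the error of approximating an arbitrary $k$ by a large multiple of a bounded vector is linear in $|k_1|+\ldots+|k_n|$ with small constant. The one genuine streamlining is your choice of the full $\ell_1$-ball as generating set with weights $d(a)$: subadditivity of the invariant metric then gives both the consistency of the prescription and the inequality $d_F\geq d$ for free, so only the upper bound needs the uniform Fekete estimate, whereas the paper must run the comparison in both directions (its displays \ref{propeq1}--\ref{propeq2} and \ref{propeq4}--\ref{propeq5}).
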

\begin{proof}
Let us denote the generators by $z_1,\ldots,z_n$. For every $x=k_1\cdot z_1+\ldots+k_n\cdot z_n\in\Int^n$, by $x_i$ we denote the integer coefficient $k_i$, for $i\leq n$. Moreover, as before by $|x|$ we denote the value $|x_1|+\ldots+|x_n|$.

{For every $k\in\Nat$, denote by $A_k$ the set $\{x\in\Int^n:\forall i\leq n (|x_i|\leq k)\}$. Set $$B_k=\sup_{x\in\Int^n}\min \{\frac{|x-l\cdot y|}{|x|}:l\in\Nat,y\in A_k,|l\cdot y|\leq 2|x|\}.$$}

\begin{claim}
The sequence $(B_k)_k$ converges and we have $\lim_{k\to\infty} B_k=0$.
\end{claim}

Suppose for a moment that the claim has been proved and we show how to finish the proof of Proposition \ref{goodapprox}. Let $M=\max\{d(z_i):i\leq n\}$. Now choose $m_0$ so that for every $m\geq m_0$, $B_m<\varepsilon/(4M)$. For every $x\in A_{m_0}$ let $L_x=\lim \frac{d(l\cdot x)}{l}$, and let $m_1\geq m_0$ be such that for every $x\in A_{m_0}$ we have that for every $m\geq m_1$, $|\frac{d(m\cdot x)}{m}-L_x|<\varepsilon/16$. Finally, let $m\geq m_1$ be such that for every $y\in A_{m_0}$ and every $m'<m_1$ we have $\frac{d(m'\cdot y)}{m}<\varepsilon/16$.

Now let $d_F$ be the finitely generated metric generated by the values of $d$ on $(m A_{m_0})^2$, where $m A_{m_0}=\{m'\cdot x:m'\leq m,x\in A_{m_0}\}$; i.e. for every $a,b\in \Int^n$ we have $d_F(a,b)=\min\{d(a_1,b_1)+\ldots+d(a_j,b_j):j\in \Nat,(a_i,b_i)\in (m A_{m_0})^2\wedge a=a_1+\ldots + a_n,b=b_1+\ldots + b_n,i\leq j\}$. By Remark \ref{defin_remark}, for every $x,y\in (mA_{m_0})$ we have $d_F(x,y)=d(x,y)$. We claim that $d_F$ is as desired.

Take $x\in\Int^n$. Let $l\in\Nat$ and $y\in A_{m_0}$ be such that $\frac{|x-l\cdot y|}{|x|}\leq B_{m_0}$ and $|l\cdot y|\leq 2|x|$. We have $$\frac{|d(x)-d_F(x)|}{|x|}\leq \frac{|d(l\cdot y)-d_F(l\cdot y)|}{|x|}+\frac{d(x-l\cdot y)+d_F(x-l\cdot y)}{|x|}.$$

It suffices to show that $\frac{|d(l\cdot y)-d_F(l\cdot y)|}{|x|}<\varepsilon/2$ and that $\frac{d(x-l\cdot y)+d_F(x-l\cdot y)}{|x|}<\varepsilon/2$.

First we do the former. If $l\leq m$ then we have $d(l\cdot y)-d_F(l\cdot y)=0$. If $l>m$, then write $l$ as $l'\cdot m_1+r$, where $r<m_1$. Then we have 
\begin{multline}\label{eq2}
\frac{d_F(l\cdot y)}{l}\leq \frac{d_F(l'm_1\cdot y)+d_F(r\cdot y)}{l}\leq \frac{d_F(m_1\cdot y)}{m_1}+\frac{d_F(r\cdot y)}{m}=\\
\frac{d(m_1\cdot y)}{m_1}+\frac{d(r\cdot y)}{m}\leq L_y+\varepsilon/16+\varepsilon/16.
\end{multline}
Also, since $l>m\geq m_1$ we get
\begin{equation}\label{eq3}
|\frac{d(l\cdot y)}{l}-L_y|\leq \varepsilon/16.
\end{equation}

Note also that since $d_F$ agrees with $d$ on $m A_{m_0}$ we get by Lemma \ref{greatestmet_lem} that $d_F\geq d$, thus in particular 
\begin{equation}\label{simpleeq}
\frac{d_F(l\cdot y)}{l}\geq L_y-\varepsilon/16.
\end{equation}
Since $2|x|\geq l\cdot |y|$ we get from \eqref{eq2}, \eqref{eq3} and \eqref{simpleeq} that indeed $\frac{|d(l\cdot y)-d_F(l\cdot y)|}{|x|}<\varepsilon/2$. Now we do the latter. We have $\frac{d(x-l\cdot y)}{|x|}\leq \frac{|x-l\cdot y|M}{|x|}\leq B_{m_0}M$ as well as $\frac{d_F(x-l\cdot y)}{|x|}\leq \frac{|x-l\cdot y|M}{|x|}\leq B_{m_0}M$. Since $B_{m_0}<\varepsilon/(4M)$ we are done.\\

It remains to prove the claim. Fix some $\varepsilon>0$. We shall find $k\in\Nat$ such that $B_k<\varepsilon$ (it will be clear, though not needed, that for any $l>k$, $B_l<\varepsilon$). We start with some general estimates. For an arbitrary $k\in\Nat$ take some $x\in\Int^n\setminus A_k$. Denote by $|x|_\infty$ the value $\max_{i\leq n} |x_i|$ and by $l$ the value $\lceil \frac{|x|_\infty}{k}\rceil$. Then we can find $y^x\in A_k$ such that $|x-l\cdot y^x|\leq k+(n-1)l$, and so also $l\cdot |y^x|\leq 2|x|$. Indeed, let $i\leq n$ be such that $|x_i|=|x|_\infty$. Then we can take $y^x\in A_k$ such that $y^x_i=k$ if $x_i>0$, resp. $y^x_i=-k$ if $x_i<0$, and we have $|x_i-ly^x_i|<k$. For $j\leq n$ such that $j\neq i$ we can find $y^x_j\in [-k,k]$ such that $|ly^x_j|\leq 2|x_j|$ and $|x_j-ly^x_j|<l$. Now we have 
\begin{equation}\label{eq4}
\frac{|x-l\cdot y^x|}{|x|}\leq \frac{k+(n-1)(|x|_\infty/k+1)}{|x|_\infty}\leq \frac{k+n-1}{|x|_\infty}+\frac{n-1}{k}.
\end{equation}

Now we find suitable $k$. First, fix $k'$ large enough so that $(n-1)/k'<\varepsilon/2$. Next take $k$ large enough so that $(k'+n-1)/k<\varepsilon/2$. We now claim that $B_k<\varepsilon$. Take $x\in\Int^n$. We need to find $y\in A_k$ and $l$ such that $|l\cdot y|\leq 2|x|$ and $|x-l\cdot y|/|x|<\varepsilon$. If $|x|_\infty\leq k$ then $x\in A_k$ and we take $y=x$ and $l=1$. Otherwise, $|x|_\infty>k$ and we take $y^x\in A_{k'}\subseteq A_k$ and $l=\lceil \frac{|x|_\infty|}{k'}\rceil$ as above. By \eqref{eq4} we have $$\frac{|x-l\cdot y^x|}{|x|}\leq \frac{k'+n-1}{|x|_\infty}+\frac{n-1}{k'}<\varepsilon/2+\varepsilon/2.$$

This finishes the proof of the claim and Proposition \ref{goodapprox}.
\end{proof}
\begin{remark}\label{approx_remark}
Note that $d_F$ from the proof was generated by values of $d$ on $m A_k$ for some $k,m$. Thus by Lemma \ref{greatestmet_lem} we have $d_F\geq d$ since $d_F$ is the greatest invariant metric that agrees with $d$ on $m A_k$. Moreover, if $k'>k$ and $m'>m$, and $d'_F$ is generated by values of $d$ on $m' A_{k'}$, then again by Lemma \ref{greatestmet_lem}, $d\leq d'_F\leq d_F$. In particular, when choosing finitely generated metrics $d_F^\varepsilon$, resp. $d_F^{\varepsilon'}$, that are $\varepsilon$-close, resp. $\varepsilon'$-close to $d$, for $\varepsilon'<\varepsilon$, we may suppose that $d\leq d_F^{\varepsilon'}\leq d_F^\varepsilon$.
\end{remark}
\begin{lem}\label{realtorat}
Let $d$ be a finitely generated metric on $\Int^n$ and let $\varepsilon>0$. Then there exists a finitely generated rational metric $d_R$ on $\Int^n$ such that $d$ and $d_R$ are $\varepsilon$-close and $d\leq d_R$.
\end{lem}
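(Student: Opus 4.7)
My plan is to exploit the linear-algebraic nature of a finitely generated invariant metric. Such a $d$ is determined by a finite set of generators $\{g_1,\ldots,g_m\}\subseteq\Int^n$ (which, by invariance, I take to be closed under negation and to generate $\Int^n$ as a group) together with their assigned norms $r_j:=d(g_j)>0$, via the formula $d(g)=\min\{\sum_j n_j r_j:g=\sum_j n_j g_j,\,n_j\in\Nat_0\}$. For this formula to correctly yield $d(g_i)=r_i$, the consistency constraints $r_i\le\sum_j n_j r_j$ must hold for every decomposition $g_i=\sum_j n_j g_j$. My strategy is to perturb $(r_1,\ldots,r_m)$ to nearby rationals $(r_1',\ldots,r_m')$ preserving all such constraints; the resulting formula then defines a finitely generated rational metric $d_R$ with $d_R(g_i)=r_i'$.

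The first step is to reduce this a priori infinite family of inequalities to finitely many relevant ones. Setting $c:=\min_j r_j>0$, for any decomposition with total degree $N:=\sum_j n_j$ one has $\sum_j n_j r_j\ge cN$, so the slack in $r_i\le\sum_j n_j r_j$ grows linearly in $N$. Hence there is some $N_0$ (depending only on $c$ and $\max_j r_j$) such that every decomposition with $N>N_0$ has slack exceeding any perturbation of size below $c/2$, and these constraints are automatically preserved. Only the finite family of decompositions with $N\le N_0$ needs active management.

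Among these finitely many small decompositions I separate the \emph{tight} ones ($r_i=\sum_j n_j r_j$) from the \emph{strict} ones (positive slack, bounded below by some $\eta>0$). The strict constraints are preserved by any perturbation of size smaller than $\eta/(N_0+1)$. The tight constraints form a finite homogeneous linear system in the unknowns $(x_1,\ldots,x_m)$ with \emph{integer} coefficients; its solution set $V\subseteq\Rea^m$ is thus a rational linear subspace containing $(r_1,\ldots,r_m)$, and $V\cap\Rat^m$ is dense in $V$ because $V$ admits an integer basis. I therefore choose rational $(r_1',\ldots,r_m')\in V\cap\Rat^m_{>0}$ with $|r_j-r_j'|<\delta$ for $\delta$ suitably small.

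To verify $\varepsilon$-closeness, I bound the total degree of an optimal $d$-decomposition of $g=\sum_i k_i z_i$ by $d(g)/c\le K\cdot\max_i d(z_i)/c$, where $K=\sum_i|k_i|$; hence $|d_R(g)-d(g)|\le\delta\sum_j n_j\le C\delta K$ for a constant $C$ depending only on $d$, and symmetrically in the other direction once $\delta$ is small enough that $\min_j r_j'\ge c/2$. Taking $\delta$ smaller than $c/2$, $\eta/(N_0+1)$, and $\varepsilon/C$ simultaneously completes the argument. The main technical point is identifying the tight constraints as \emph{integer-defined} equations, which is what makes the rational-density step work; everything else is careful bookkeeping of the finite collection of positive slack quantities.
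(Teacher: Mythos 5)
Your argument is correct, but it goes by a genuinely different and considerably heavier route than the paper's. The paper simply rounds each generating distance \emph{upward}, choosing $d_R(a,b)\in[d(a,b)+\varepsilon/2,\,d(a,b)+\varepsilon]\cap\Rat$ for each pair $(a,b)$ in the generating set: the uniform upward shift makes $d_R\geq d$ hold automatically (every $d_R$-decomposition already costs at least its $d$-cost), so no consistency analysis is needed at all, and the upper bound $d_R-d\leq\varepsilon\cdot(|k_1|+\dots+|k_n|)$ follows from the same degree count you perform at the end. Your two-sided perturbation instead forces you to preserve the consistency inequalities, whence the reduction to finitely many constraints and the rational-subspace argument for the tight ones. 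That machinery is sound, but observe that it is only needed to guarantee $d_R(g_j)=r_j'$ exactly --- a conclusion the lemma does not require, and one that your own closeness verification never uses (it relies only on $|r_j-r_j'|<\delta$ and $\min_j r_j'\geq c/2$). So your proof establishes a slightly stronger statement (exact realization of the perturbed generator values, in the spirit of the consistency discussion of Remark \ref{defin_remark}) at the cost of substantial extra work, while the paper's one-sided trick makes the lower estimate free. One small point to add to yours: you should also impose $x_j=x_{j'}$ whenever $g_{j'}=-g_j$, another integer-coefficient equation satisfied by $(r_j)$, so that the perturbed weights still define a symmetric $d_R$.
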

\begin{proof}
Let us again denote the generators of $\Int^n$ as $z_1,\ldots,z_n$ and let $A\subseteq (\Int^n)^2$ be the generating set for $d$. We may assume that $(z_i,0)\in A$ for every $i\leq n$. Let $M=\max\{d(z_i,0):i\leq n\}$ and $m=\min\{d(a,b):a\neq b, (a,b)\in A\}$, and set $K=\lceil \frac{M}{m}\rceil$. For every $(a,b)\in A$ we set $d'_R(a,b)$ to be an arbitrary element of $[d(a,b)+\varepsilon/(2K),d(a,b)+\varepsilon/K]\cap \Rat$. Then we define the rational metric $d_R$ by means of $\{d'_R(a,b):(a,b)\in A\}$, i.e. as usual, for any $a,b\in \Int^n$ we set $$d_R(a,b)=\min\{d'_R(a_1,b_1)+\ldots+d'_R(a_l,b_l):$$ $$a=a_1+\ldots+a_l,b=b_1+\ldots+b_l,(a_i,b_i)\in A\; \forall i\leq l\}.$$ Note that clearly $d\leq d_R$.

Now let $k_1,\ldots,k_n\in \Int$ be arbitrary. To show that $d_R$ and $d$ are $\varepsilon$-close, we must check that $|d(k_1\cdot z_1+\ldots+k_n\cdot z_n)-d_R(k_1\cdot z_1+\ldots+k_n\cdot z_n)|<\varepsilon\cdot (|k_1|+\ldots+|k_n|)$. Suppose that $d(k_1\cdot z_1+\ldots+k_n\cdot z_n)=d(k_1\cdot z_1+\ldots+k_n\cdot z_n,0)=\sum_{i=1}^l d(a_i,b_i)$ where for every $i\leq l$ we have $(a_i,b_i)\in A$ and $k_1\cdot z_1+\ldots+k_n\cdot z_n=a_1+\ldots+a_l$, $0=b_1+\ldots+b_l$. Let $I=\{i\leq l:a_i\neq b_i\}$ and $l'=|I|$.

We claim that $l'\leq (|k_1|+\ldots+|k_n|)\cdot K$. Indeed, we have $$l'\cdot m\leq\sum_{i\in I} d(a_i,b_i)=\sum_{i=1}^l d(a_i,b_i)=d(k_1\cdot z_1+\ldots+k_n\cdot z_n)\leq$$ $$|k_1|\cdot d(z_1,0)+\ldots+|k_n|\cdot d(z_n,0)\leq (|k_1|+\ldots+|k_n|)\cdot M$$ which gives the inequality.

Thus we have $$d_R(k_1\cdot z_1+\ldots+k_n\cdot z_n)\leq\sum_{i=1}^l d'_R(a_i,b_i)=\sum_{i\in I} d'_R(a_i,b_i)=$$ $$\sum_{i\in I} (d(a_i,b_i)+\delta_i)\leq d(k_1\cdot z_1+\ldots+k_n\cdot z_n)+l'\cdot \varepsilon/K$$ where $\delta_i\in [\varepsilon/(2K),\varepsilon/K]$, thus we obtain $$d_R(k_1\cdot z_1+\ldots+k_n\cdot z_n)-d(k_1\cdot z_1+\ldots+k_n\cdot z_n)\leq l'\cdot\varepsilon/K\leq (|k_1|+\ldots+|k_n|)\cdot\varepsilon$$ which is what we wanted to prove. 
\end{proof}

\noindent
{\bf\underline{The embedding construction}}\\
We now start the embedding construction which will be done by an inductive process. Let $p$ be an arbitrary invariant metric on $\bigoplus_{n\in \Nat} \Int$. For notational reasons, we shall denote this group as $(\bigoplus_{n\in \Nat} Z_n,p)$ where $Z_n$, for each $n$, is a copy of $\Int$ generated by an element $z_n$. Recall from the beginning of this section that it suffices to embed only such groups. Also recall that $G$ denotes the countable dense subgroup of $\mathbb{G}$ constructed by the Fra\" iss\' e construction.

Before starting the embedding construction, let us define a sequence $(\rho_n)_{n\in\Nat}$ of positive reals (with the property that $\sum_n \rho_n<\infty$) as follows: for any $n\in\Nat$ we set $\rho_n=\min\{1/2^n,\min\{2\cdot p(z_i,0):i\leq n\}\}$.

\begin{claim}\label{claim1}
For every $n\in \Nat$ there is a subgroup $S_1^n\oplus\ldots\oplus S_n^n\sqsubseteq G$ with generators $s_1^n,\ldots,s_n^n$ such that $d\upharpoonright (S_1^n\oplus\ldots\oplus S_n^n)$ is $1/2^n$-close to $p\upharpoonright Z_1\oplus\ldots\oplus Z_n$ and moreover, for every $n$ and $m$ we have $d(s_n^m,s_n^{m+1})=\rho_n$; thus the sequence $(s_n^j)_j$ is Cauchy.
\end{claim}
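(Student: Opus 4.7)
The plan is to construct the subgroups $S^n := S_1^n\oplus\cdots\oplus S_n^n$ by induction on $n$, actually proving the slightly strengthened inductive hypothesis that $d\!\upharpoonright\! S^n$ is $(1/2^{n+1})$-close (rather than merely $(1/2^n)$-close) to $p\!\upharpoonright\! Z_1\oplus\cdots\oplus Z_n$. This is what will make the amalgamation at the next stage consistent, and it obviously implies the claim. For the base case $n=1$, apply Proposition~\ref{goodapprox} followed by Lemma~\ref{realtorat} to produce a finitely generated rational metric on $\Int$ that is $(1/4)$-close to $p\!\upharpoonright\! Z_1$; then invoke Fact~\ref{charac_G} with $F = \{0\}$ to $\sqsubseteq$-embed this group into $G$, obtaining $S_1^1$ with generator $s_1^1$.

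For the inductive step, suppose $S^n\sqsubseteq G$ has been built. Using Proposition~\ref{goodapprox} and Lemma~\ref{realtorat} once more, choose a finitely generated rational metric $d^\ast$ on a fresh copy $T := T_1\oplus\cdots\oplus T_{n+1}$ of $\Int^{n+1}$ (generators $t_1,\ldots,t_{n+1}$) that is $(1/2^{n+2})$-close to $p\!\upharpoonright\! Z_1\oplus\cdots\oplus Z_{n+1}$. Form the abstract group $A := S^n\oplus T$, and in the sense of Remark~\ref{defin_remark} equip it with the finitely generated rational metric prescribed by the generating set
\[
A_A := A_{S^n}\cup A_T\cup\{(s_i^n, t_i) : 1\leq i\leq n\},
\]
where the cross pairs are declared to have distance $1/2^n$. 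Once consistency is verified, $A\in\mathcal G$. Since $S^n$ is a direct summand of $A$ and $S^n\sqsubseteq G$, Fact~\ref{charac_G} produces a $\sqsubseteq$-embedding $\rho\colon A\hookrightarrow G$ extending the identity on $S^n$. Setting $s_i^{n+1} := \rho(t_i)$ and $S^{n+1} := \rho(T)$, transitivity of $\sqsubseteq$ (immediate from its definition as a direct-summand relation) gives $S^{n+1}\sqsubseteq G$; the distance condition $d(s_i^n, s_i^{n+1})=1/2^n$ and the strengthened closeness at stage $n+1$ are immediate from the construction.

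The main obstacle is the consistency check. Projecting any decomposition of an $A_A$-pair to the two summands $\Int^n$ and $\Int^{n+1}$ splits it cleanly into three bundles of $A_{S^n}$-, $A_T$-, and cross pairs; writing $u := \sum_{j\in J_N}\varepsilon_j s_{i_j}^n\in S^n$ and $v := \sum_{j\in J_N}\varepsilon_j t_{i_j}\in T$ for the net cross-pair contribution (so $u$ and $v$ are identified under the canonical correspondence $s_i^n\leftrightarrow t_i$), and noting that the total number of cross pairs used satisfies $|J_N|\geq |u|_1 = |v|_1$, the consistency for an $A_{S^n}$-pair $(a,b)$ reduces by the triangle inequality in $S^n$ to the estimate $d_{S^n}(u) \leq d^\ast(v) + |J_N|/2^n$. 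But $p(u)=p(v)$, and combining the closeness hypotheses gives
\[
|d_{S^n}(u) - d^\ast(v)| \leq \bigl(\tfrac{1}{2^{n+1}} + \tfrac{1}{2^{n+2}}\bigr)|u|_1 < \tfrac{|u|_1}{2^n} \leq \tfrac{|J_N|}{2^n},
\]
which delivers the required bound. The symmetric estimate on the $T$-side is identical, and consistency for cross pairs is even easier: any decomposition of $(s_i^n, t_i)$ must use at least one cross pair (to move from $\Int^n$ to $\Int^{n+1}$), hence has cost at least $1/2^n$. This quantitative balancing between the slack $1/2^n$ on cross pairs and the combined approximation errors is the only delicate point of the whole induction.
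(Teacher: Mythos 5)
Your construction follows the same route as the paper's (which factors the claim through an auxiliary Claim~\ref{claim2}): approximate $p$ on each finite block by a finitely generated rational metric via Proposition~\ref{goodapprox} and Lemma~\ref{realtorat}, glue consecutive blocks by prescribing cross distances $1/2^n$, verify consistency of the glued prescription by bundling an arbitrary decomposition into old-block, new-block and cross pairs and estimating the cost of transporting the net cross contribution from one block to the other, and realize everything inside $G$ by Fact~\ref{charac_G}. The one genuine difference is how the numbers are made to work in the consistency check. The paper keeps the natural parameters ($1/2^{n-1}$-closeness for the old block, $1/2^n$ for the new one, cross distance $1/2^{n-1}$) and imposes the additional sandwich condition (\ref{better_approx}), $p\le d'_n\le d_{n-1}$ on the common coordinates, so that the two approximation errors telescope to at most $1/2^{n-1}$ per unit of $\ell_1$-norm rather than adding up; you instead strengthen the induction to $1/2^{n+1}$-closeness and take the new approximation $1/2^{n+2}$-close, so that the summed error $3/2^{n+2}$ per unit is strictly below the cross cost $1/2^n$. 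Your variant is sound and has the mild advantage of not having to argue that the sandwich condition can be arranged.

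One step is wrong as stated, though harmlessly so: your justification of consistency for the cross pairs. A decomposition of the pair $(s_i^n,t_i)$ need \emph{not} use any cross pair --- there is no path structure in the definition of a finitely generated metric. The decomposition $(s_i^n,0)+(0,t_i)$, refined into $A_{S^n}$- and $A_T$-pairs, costs roughly $2p(z_i)$, which is below $1/2^n$ whenever $p(z_i)$ is small; so the generated metric may well satisfy $d(s_i^n,s_i^{n+1})<1/2^n$ strictly. (The paper's own proof dismisses this case with ``this is clear'', so you are in good company.) Nothing downstream is damaged: your consistency checks for the $A_{S^n}$- and $A_T$-pairs use only the \emph{prescribed} cost $1/2^n$ of each cross pair occurring in a given decomposition, and for the conclusion of the claim one only needs $d(s_i^n,s_i^{n+1})\le 1/2^n$ to get Cauchy sequences, together with the closeness estimates to identify the limit group with $(\bigoplus_n Z_n,p)$. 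But you should either supply a correct argument for the equality or weaken that part of the statement to an inequality.
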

Once this claim is proved we are done. Indeed, for every $n\in\Nat$ let $s_n\in \mathbb{G}$ be the limit of $(s_n^j)_j$ and let $S_n\leq \mathbb{G}$ denote the group generated by $s_n$. It follows that $\bigvee _{n\in\Nat} S_n\leq \mathbb{G}$ is isometrically isomorphic to $(\bigoplus_{n\in \Nat} Z_n,p)$.

Claim \ref{claim1} is a corollary of the following claim.
\begin{claim}\label{claim2}
There exists a copy of $\Int$ denoted as $T_1^1$ generated by $t_1^1$ with a finitely generated rational metric $d_1$, i.e. $(T_1^1,d_1)\in \mathcal{G}$ such that $(T_1^1,d_1)$ is $1/2$-close to $(Z_1,p)$. Moroever, for any $n>1$ there exists a copy of $\Int^n$ denoted as $T_1^n\oplus\ldots\oplus T_n^n$, generated by $t_1^n,\ldots,t_n^n$ such that there is a finitely generated rational metric $d_n$ on $(T_1^{n-1}\oplus\ldots\oplus T_{n-1}^{n-1})\oplus (T_1^n\oplus\ldots\oplus T_n^n)$, thus $((T_1^{n-1}\oplus\ldots \oplus T_{n-1}^{n-1})\oplus (T_1^n\oplus\ldots\oplus T_n^n),d_n)\in \mathcal{G}$,  such that $(T_1^n\oplus\ldots\oplus T_n^n,d_n)$ is $\rho_n$-close to $(Z_1\oplus\ldots\oplus Z_n,p)$, $d_n\upharpoonright (T_1^{n-1}\oplus\ldots\oplus T_{n-1}^{n-1})=d_{n-1}\upharpoonright (T_1^{n-1}\oplus\ldots\oplus T_{n-1}^{n-1})$ and for every $i<n$ we have $d_n(t_i^{n-1},t_i^n)=\rho_{n-1}$.
\end{claim}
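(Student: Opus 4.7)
The plan is to prove Claim \ref{claim2} by induction on $n$. For the base case $n=1$, I would apply Proposition \ref{goodapprox} to the invariant metric $p\upharpoonright Z_1$ with $\varepsilon = 1/4$, obtaining a finitely generated invariant metric on $\Int$ that is $(1/4)$-close to $p$, and then apply Lemma \ref{realtorat} with $\varepsilon = 1/4$ to replace it by a rational finitely generated invariant metric $d_1$ on a fresh copy $T_1^1 \cong \Int$ which is $(1/2)$-close to $(Z_1, p)$.

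For the inductive step, set $G_1 := T_1^{n-1}\oplus\ldots\oplus T_{n-1}^{n-1}$ with finite generating set $A_{n-1}$ of $d_{n-1}$. First I invoke Propositions \ref{goodapprox} and \ref{realtorat} on a fresh copy $G_2 := T_1^n\oplus\ldots\oplus T_n^n \cong \Int^n$ to produce a rational finitely generated invariant metric $d_n^*$ on $G_2$, generated by some $A_2$, which is $(1/2^n)$-close to $(Z_1\oplus\ldots\oplus Z_n, p)$. The specific values are chosen so that for each $j < n$ the pair $(t_j^n, 0)$ lies in $A_2$ with $d_n^*(t_j^n, 0)$ in the intersection
\[
[p(z_j,0) - 1/2^n,\; p(z_j,0) + 1/2^n] \cap [d_{n-1}(t_j^{n-1},0) - 1/2^{n-1},\; d_{n-1}(t_j^{n-1},0) + 1/2^{n-1}],
\]
which is non-empty because $d_{n-1}$ is $(1/2^{n-1})$-close to $p$. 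Then I define $d_n$ on $G_1\oplus G_2$ as the invariant metric generated by
\[
A := A_{n-1}\cup A_2\cup \{(t_j^{n-1}, t_j^n),\, (-t_j^{n-1}, -t_j^n) : j < n\},
\]
with the cross-pairs assigned distance $1/2^{n-1}$.

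The main task is to verify the consistency of this prescription in the sense of Remark \ref{defin_remark}: for every $(a,b)\in A$ and every decomposition $(a,b) = \sum_k (a_k,b_k)$ into pairs of $A$, the sum $\sum_k d(a_k,b_k)$ is at least the prescribed distance $d(a,b)$. Granting consistency, the identities $d_n\upharpoonright G_1 = d_{n-1}$ and $d_n(t_j^{n-1}, t_j^n) = 1/2^{n-1}$, as well as the upper bound half of the $(1/2^n)$-closeness of $d_n\upharpoonright G_2$ to $p$, follow immediately since each side is bounded above by the prescribed value via an obvious one-pair decomposition. For consistency itself I would fix a decomposition, partition its pairs into $K_1$ (from $A_{n-1}$), $K_2$ (from $A_2$), and $K_3$ (cross-pairs with signs $\varepsilon_k$), and set $\alpha := \sum_{k\in K_3} \varepsilon_k t_{i_k}^{n-1} \in G_1$ and $\beta := \sum_{k\in K_3} \varepsilon_k t_{i_k}^n \in G_2$, which correspond to the same element $\gamma \in Z_1\oplus\ldots\oplus Z_{n-1}$. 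Projecting the decomposition separately onto $G_1$ and $G_2$ gives equations expressing $\sum_{K_1} a_k, \sum_{K_1} b_k$ and $\sum_{K_2} a_k, \sum_{K_2} b_k$ in terms of $a, b, \alpha, \beta$; the definition of $d_{n-1}$ bounds $\sum_{K_1} d_{n-1}(a_k,b_k)$ from below by a $d_{n-1}$-norm of the $G_1$-projection, and similarly for the $K_2$ sum via $d_n^*$. Combining these with the $|K_3|/2^{n-1}$ cross-contribution, the triangle inequality for $p$ applied to $\gamma$ and the shifted $G_2$-element, and the $(1/2^{n-1})$- and $(1/2^n)$-closeness assumptions yields the target bound.

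The principal obstacle is the delicate numerical book-keeping in this estimate: the cross-pair cost $1/2^{n-1}$ is only just compatible with the two closeness tolerances, so it is crucial to choose $d_n^*(t_j^n, 0)$ in the narrow intersection above—not merely anywhere in $[p(z_j,0) - 1/2^n,\, p(z_j,0) + 1/2^n]$—to secure the generator-level triangle inequality between $d_{n-1}(t_j^{n-1}, 0)$, $d_n^*(t_j^n, 0)$, and the prescribed cross-distance $1/2^{n-1}$. Once this choice is in place, the projection argument propagates the tight estimate from the generators to every pair in $A$, and hence to all of $G_1\oplus G_2$, completing the inductive step.
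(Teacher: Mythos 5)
Your overall strategy coincides with the paper's: the same induction, the same generating set $A_{n-1}\cup A_2\cup\{\text{cross-pairs at distance }1/2^{n-1}\}$, and the same consistency verification via splitting a decomposition into the three classes of pairs and projecting onto the two summands. The base case and the reduction of everything to the consistency check are fine. However, there is a genuine quantitative gap in the key estimate.

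The consistency check for a pair $(a,b)$ with $a,b\in T_1^{n-1}\oplus\cdots\oplus T_{n-1}^{n-1}$ forces you to compare $d_n^*(\gamma)$ with $d_{n-1}(\gamma')$, where $\gamma=\sum_{i<n}c_i t_i^n$ is the aggregate of the $K_2$-pairs and $\gamma'=\sum_{i<n}c_i t_i^{n-1}$ is its counterpart in $G_1$, for \emph{arbitrary} integer tuples $(c_1,\dots,c_{n-1})$, not just for single generators. You propose to route this comparison through $p$, using that $d_{n-1}$ is $1/2^{n-1}$-close and $d_n^*$ is $1/2^n$-close to $p$; but that yields only
$d_{n-1}(\gamma')-d_n^*(\gamma)\leq \bigl(1/2^{n-1}+1/2^n\bigr)\sum_i|c_i| = (3/2^n)\sum_i|c_i|$,
whereas the cross-pairs contribute only $|K_3|\cdot 1/2^{n-1}$ with $|K_3|$ possibly equal to $\sum_i|c_i|$, i.e.\ a budget of $(2/2^n)\sum_i|c_i|$. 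The estimate therefore falls short by $(\sum_i|c_i|)/2^n$ and the inequality $\sum_k d_n(a_k,b_k)\geq d_{n-1}(a,b)$ does not follow. Your intersection condition on the values $d_n^*(t_j^n,0)$ repairs exactly the case $\sum_i|c_i|=1$ and does not propagate: $d_n^*$ is generated by many pairs besides $(t_j^n,0)$ (it must be, to approximate $p$ well), so its values at non-generator elements are not controlled by its values at the generators, and no triangle-inequality bootstrap recovers the needed bound for general $\gamma$.

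The paper closes this gap by imposing a condition on the whole subgroup rather than on the generators: it arranges (its condition (\ref{better_approx})) that
$p(\sum_i c_i z_i)\leq d'_n(\sum_i c_i t_i^n)\leq d_{n-1}(\sum_i c_i t_i^{n-1})$
for \emph{all} $c_1,\dots,c_{n-1}\in\Int$. Then $d_{n-1}(\gamma')-d'_n(\gamma)\leq d_{n-1}(\gamma')-p(\sum_i c_iz_i)\leq(\sum_i|c_i|)/2^{n-1}$, which exactly matches the cross-pair budget (and the other inequality $d'_n\leq d_{n-1}$ handles the symmetric case $(a,b)\in A'_n$). To complete your argument you would need to add and justify such a global sandwich condition on $T_1^n\oplus\cdots\oplus T_{n-1}^n$; the generator-level condition alone is not enough.
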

Assume for a moment that Claim \ref{claim2} has been proved. Then consider $(T_1^1,d_1)$ from the claim. Since it belongs to the class $\mathcal{G}$ due to Fact \ref{charac_G} there exists an isometric homomorphism $\iota_1: (T_1^1,d_1)\hookrightarrow G$ such that $\iota_1[T_1^1]\sqsubseteq G$. Denote $s_1^1=\iota_1(t_1^1)$ and $S_1^1=\iota_1[T_1^1]$. Let $n>1$ be given and suppose that for every $i<n$ we have produced $S_1^i\oplus\ldots\oplus S_i^i\sqsubseteq G$ where $S_1^i\oplus\ldots\oplus S_i^i=\iota_i[T_1^i\oplus\ldots\oplus T_i^i]$ where for $i>1$, $\iota_i:(T_1^{i-1}\oplus\ldots\oplus T_{i-1}^{i-1})\oplus (T_1^i\oplus\ldots\oplus T_i^i)\hookrightarrow G$.

We use Claim \ref{claim2} to obtain a finitely generated rational metric $d_n$ on $(T_1^{n-1}\oplus\ldots \oplus T_{n-1}^{n-1})\oplus (T_1^n\oplus\ldots\oplus T_n^n)$. Since $d_n\upharpoonright (T_1^{n-1}\oplus\ldots\oplus T_{n-1}^{n-1})=d_{n-1}\upharpoonright (T_1^{n-1}\oplus\ldots\oplus T_{n-1}^{n-1})$, using Fact \ref{charac_G} we can find $\iota_n: (T_1^{n-1}\oplus\ldots \oplus T_{n-1}^{n-1})\oplus (T_1^n\oplus\ldots\oplus T_n^n)\hookrightarrow G$ such that $\iota_n[(T_1^{n-1}\oplus\ldots \oplus T_{n-1}^{n-1})\oplus (T_1^n\oplus\ldots\oplus T_n^n)]\sqsubseteq G$ and $\iota_n\upharpoonright (T_1^{n-1}\oplus\ldots\oplus T_{n-1}^{n-1})=\iota_{n-1} \upharpoonright (T_1^{n-1}\oplus\ldots\oplus T_{n-1}^{n-1})$. For every $i\leq n$ we then denote $s_i^n=\iota_n(t_i^n)$ and $S_i^n=\iota_n[T_i^n]$ and we are done.

Thus it remains to prove Claim \ref{claim2}.\\

Consider a copy of $\Int$ denoted as $T_1^1$ generated by $t_1^1$. Using Proposition \ref{goodapprox} and Lemma \ref{realtorat} we define a finitely generated rational metric $d_1$ on $T_1^1$ so that $p\upharpoonright Z_1$ and $d_1$ are $\rho_1$-close, i.e. for every $k\in \Int$ we have $|d(k\cdot z_1)-d_1(k\cdot t_1^1)|\leq |k|\cdot \rho_1$.\\

Suppose that for every $i<n$ we have already found $T_1^i\oplus\ldots\oplus T_i^i$ with generators $t_1^i,\ldots,t_i^i$ and a finitely generated rational metric $d_i$ such that $(T_1^i\oplus\ldots\oplus T_i^i, d_i)$ is $\rho_i$-close to $(Z_1\oplus\ldots\oplus Z_i,p)$ and moreover, for every $j<n$ and $k<n-1$ we have $d_{k+1}(t_j^k,t_j^{k+1})=\rho_k$.

Consider a copy of $\Int^n$ denoted as $T_1^n\oplus\ldots\oplus T_n^n$ with generators $t_1^n,\ldots,t_n^n$. Using Proposition \ref{goodapprox} and Lemma \ref{realtorat} we define a finitely generated rational metric $d'_n$ on $T_1^n\oplus\ldots\oplus T_n^n$ so that $(Z_1\oplus\ldots\oplus Z_n,p)$ and $(T_1^n\oplus\ldots\oplus T_n^n,d'_n)$ are $\rho_n$-close, i.e. for every $k_1,\ldots,k_n\in \Int$ we have $|p(k_1\cdot z_1+\ldots +k_n\cdot z_n)-d'_n(k_1\cdot t_1^n+\ldots +k_n\cdot t_n^n)|\leq \rho_n\cdot(|k_1|+\ldots+|k_n|)$. Also, by Remark \ref{approx_remark} and Lemma \ref{realtorat} it follows that for every $k_1,\ldots,k_{n-1}\in \Int$ we have
\begin{multline}\label{better_approx}
p(k_1\cdot z_1+\ldots+k_{n-1}\cdot z_{n-1})\leq d'_n(k_1\cdot t_1^n+\ldots+k_{n-1}\cdot t_{n-1}^n)\leq \\ d_{n-1}(k_1\cdot t_1^{n-1}+\ldots+k_{n-1}\cdot t_{n-1}^{n-1}).
\end{multline}

We now extend the finitely generated rational metric $d'_n$ on  $T_1^n\oplus\ldots\oplus T_n^n$ to a finitely generated rational metric $d_n$ on $(T_1^{n-1}\oplus\ldots\oplus T_{n-1}^{n-1})\oplus (T_1^n\oplus\ldots\oplus T_n^n)$ so that $d_n$ and $d_{n-1}$ coincide on $T_1^{n-1}\oplus\ldots\oplus T_{n-1}^{n-1}$.

Let $d_n$ be the unique finitely generated rational metric generated by $d'_{n-1}=d_{n-1}\upharpoonright (T_1^{n-1}\oplus\ldots\oplus T_{n-1}^{n-1})$, $d'_n$ and by the distance $d_n(t_j^{n-1},t_j^n)=\rho_{n-1}$ for every $j\leq n-1$. More precisely, suppose that $A'_{n-1}\subseteq (T_1^{n-1}\oplus\ldots\oplus T_{n-1}^{n-1})^2$ is a finite set of pairs that generates $d'_{n-1}=d_{n-1}\upharpoonright (T_1^{n-1}\oplus\ldots\oplus T_{n-1}^{n-1})$ and that $A'_n\subseteq (T_1^n\oplus\ldots\oplus T_n^n)^2$ is a finite set of pairs that generates $d'_n$. We set $A_n=A'_{n-1}\cup A'_n\cup A_{\rho_{n-1}}$, where $A_{\rho_{n-1}}=\{(\varepsilon\cdot t_j^{n-1},\varepsilon\cdot t_j^n),(\varepsilon\cdot t_j^n,\varepsilon\cdot t_j^{n-1}):j\leq n-1,\varepsilon\in\{1,-1\}\}$, to be the set generating $d_n$, and for every pair $(a,b)\in A_n$ we set $$d_n(a,b)=\begin{cases} d'_{n-1}(a,b)(=d_{n-1}(a,b)) & (a,b)\in A'_{n-1},\\
d'_n(a,b) & (a,b)\in A'_n,\\
\rho_{n-1} & (a,b)\in A_{\rho_{n-1}}.\\ \end{cases}$$

We need to check that this definition is consistent, i.e. for every $(a,b)\in A_n$ and for every $(a_1,b_1),\ldots,(a_m,b_m)\in A_n$ such that $a=a_1+\ldots+a_m$ and $b=b_1+\ldots+b_m$ we indeed have $d_n(a,b)\leq d_n(a_1,b_1)+\ldots+d_n(a_m,b_m)$ (see Remark \ref{defin_remark}).

This is easy if $(a,b)$ is equal to $(\varepsilon\cdot t_j^{n-1},\varepsilon\cdot t_j^n)$ or $(\varepsilon\cdot t_j^n,\varepsilon\cdot t_j^{n-1})$ for some $j\leq n-1$, $\varepsilon\in \{1,-1\}$. To see that, assume otherwise that for some $j\leq n-1$ there is a sequence $(a_1,b_1),\ldots,(a_m,b_m)\in A_n$ such that $t_j^n=a_1+\ldots+a_m$, $t_j^{n-1}=b_1+\ldots+b_m$ and $d_n(a_1,b_1)+\ldots+d_n(a_m,b_m)<\rho_{n-1}$. Clearly, for each $i\leq m$ we have $(a_i,b_i)\in A'_{n-1}\cup A'_n$ as otherwise we would, by definition, have $d_n(a_i,b_i)=\rho_{n-1}$. Since the group is abelian we may without loss of generality assume that there is $1\leq m'\leq m$ such that for every $i\leq m'$ we have $(a_i,b_i)\in A'_{n-1}$, and for each $m'<i\leq m$ we have $(a_i,b_i)\in A'_n$. Denote by $a^{m'}$ the element $a_1+\ldots+a_{m'}$, and by $a_{m'}$ the element $a_{m'+1}+\ldots+a_m$. The elements $b^{m'}$ and $b_{m'}$ are defined analogously. Since $t_j^n=a^{m'}+a_{m'}$, we must have $a^{m'}=0$ and $a_{m'}=t_j^n$. Similarly, since $t_j^{n-1}=b^{m'}+b_{m'}$, we must have $b^{m'}=t_j^{n-1}$ and $b_{m'}=0$. However, then we have $\sum_{i=1}^{m'} d_n(a_i,b_i)=\sum_{i=1}^{m'} d'_{n-1}(a_i,b_i)\geq d'_{n-1}(a^{m'},b^{m'})=d'_{n-1}(0,t_j^{n-1})\geq p(0,z_j)$. Similarly, we have $\sum_{i=m'+1}^m d_n(a_i,b_i)=\sum_{i=m'+1}^m d'_n(a_i,b_i)\geq d'_n(a_{m'},b_{m'})=d'_n(t_j^n,0)\geq p(z_j,0)$. Thus we get that $d_n(a_1,b_1)+\ldots+d_n(a_m,b_m)\geq 2\cdot p(z_j,0)\geq \rho_{n-1}$, and that is a contradiction.\\

We show the proof for the case when $(a,b)\in A'_{n-1}$. The last case, when $(a,b)\in A'_n$ should be clear; alternatively, one can use the proof for $(a,b)\in A'_{n-1}$ which will work for the last case as well.

So let $(a,b)\in A'_{n-1}$ and $(a_1,b_1),\ldots,(a_m,b_m)\in A_n$ be such that $a=a_1+\ldots+a_m$ and $b=b_1+\ldots+b_m$. Since the group is abelian we may without loss of generality assume that there are $0\leq m_1\leq m_2\leq m$ such that for every $i\leq m_1$ we have $(a_i,b_i)\in A'_{n-1}$, for every $m_1<i\leq m_2$ we have $(a_i,b_i)\in A'_n$ and for every $m_2<i\leq m$ we have $(a_i,b_i)\in \{(\varepsilon\cdot t_j^{n-1},\varepsilon\cdot t_j^n),(\varepsilon\cdot t_j^n,\varepsilon\cdot t_j^{n-1}):j\leq n-1, \varepsilon\in\{1,-1\}\}$. Let us write $a^{m_1}=a_1+\ldots+a_{m_1}$, $a_{m_1}^{m_2}=a_{m_1+1}+\ldots+a_{m_2}$ and $a_{m_2}=a_{m_2+1}+\ldots+a_m$; the definitions of $b^{m_1}$, $b_{m_1}^{m_2}$ and $b_{m_2}$ are analogous. It follows that $a^{m_1},b^{m_1}\in T_1^{n-1}\oplus\ldots\oplus T_{n-1}^{n-1}$ and $a_{m_1}^{m_2},b_{m_1}^{m_2}\in T_1^n\oplus\ldots\oplus T_n^n$.\\

Now, for every $x\in (T_1^{n-1}\oplus\ldots\oplus T_{n-1}^{n-1})\oplus (T_1^n\oplus\ldots\oplus T_n^n)$ let us write $x=k_1^{n-1}(x)\cdot t_1^{n-1}+\ldots+k_{n-1}^{n-1}(x)\cdot t_{n-1}^{n-1}+k_1^n(x)\cdot t_1^n+\ldots+k_n^n(x)\cdot t_n^n$.

Let us make two crucial observations:\\

\noindent \underline{Observation 1}
\begin{itemize}
\item $a_{m_1}^{m_2}$ and $b_{m_1}^{m_2}$ both lie in $T_1^n\oplus\ldots\oplus T_{n-1}^n$, i.e. we have $k_n^n(a_{m_1}^{m_2})=k_n^n(b_{m_1}^{m_2})=0$.\\

This is clear since $k_n^n(a)=k_n^n(a^{m_1})=k_n^n(a_{m_2})=0$ and $k_n^n(b)=k_n^n(b^{m_1})=k_n^n(b_{m_2})=0$.
\end{itemize}
\noindent\underline{Observation 2}
\begin{itemize}
\item We have $|k_1^n(a_{m_1}^{m_2})|+\ldots+|k_{n-1}^n(a_{m_1}^{m_2})|+|k_1^n(b_{m_1}^{m_2})|+\ldots+|k_{n-1}^n(b_{m_1}^{m_2})|\leq m-m_2$.\\

Since for every $i\leq n$ we have $k_i^n(a)=k_i^n(b)=k_i^n(a^{m_1})=k_i^n(b^{m_1})=0$ and $a=a^{m_1}+a_{m_1}^{m_2}+a_{m_2}$, $b=b^{m_1}+b_{m_1}^{m_2}+b_{m_2}$, we must have $k_i^n(a_{m_1}^{m_2})=-k_i^n(a_{m_2})$ and $k_i^n(b_{m_1}^{m_2})=-k_i^n(b_{m_2})$ for every $i\leq n$. Since for every $i\leq n$ and $m_2<j\leq m$ we have $|k_i^n(a_j)|\leq 1$ and $|k_i^n(b_j)|\leq 1$, and also $\sum_{(i\leq n;m_2<j\leq m)} |k_i^n(a_j)|+|k_i^n(b_j)|=m-m_2$, the observation follows.
\end{itemize}
Using these observations and the fact that $d_{n-1}$ on $T_1^{n-1}\oplus\ldots\oplus T_{n-1}^{n-1}$ and $d'_n$ on $T_1^n\oplus\ldots\oplus T_{n-1}^n$ are $\rho_{n-1}$-close (which follows from the inductive assumption and \ref{better_approx}) we get that $$d_{n-1}(k_1^n(a_{m_1}^{m_2})\cdot t_1^{n-1}+\ldots+k_{n-1}^n(a_{m_1}^{m_2})\cdot t_{n-1}^{n-1},k_1^n(b_{m_1}^{m_2})\cdot t_1^{n-1}+\ldots+k_{n-1}^n(b_{m_1}^{m_2})\cdot t_{n-1}^{n-1})-$$ $$d'_n(k_1^n(a_{m_1}^{m_2})\cdot t_1^n+\ldots+k_{n-1}^n(a_{m_1}^{m_2})\cdot t_{n-1}^n,k_1^n(b_{m_1}^{m_2})\cdot t_1^n+\ldots+k_{n-1}^n(b_{m_1}^{m_2})\cdot t_{n-1}^n)\leq$$ $$(|k_1^n(a_{m_1}^{m_2})|+\ldots+|k_{n-1}^n(a_{m_1}^{m_2})|+|k_1^n(b_{m_1}^{m_2})|+\ldots+|k_{n-1}^n(b_{m_1}^{m_2})|)\cdot \rho_{n-1} \leq$$ $$(m-m_2)\cdot \rho_{n-1}.$$\\ Since $\sum_{i=m_2+1}^m d_n(a_i,b_i)=(m-m_2)\cdot \rho_{n-1}$ we get $$d_n(a_1,b_1)+\ldots+d_n(a_m,b_m)\geq d'_{n-1}(a^{m_1},b^{m_1})+$$ $$(d'_{n-1}(k_1^n(a_{m_1}^{m_2})\cdot t_1^{n-1}+\ldots+k_{n-1}^n(a_{m_1}^{m_2})\cdot t_{n-1}^{n-1}, k_1^n(b_{m_1}^{m_2})\cdot t_1^{n-1}+\ldots+k_{n-1}^n(b_{m_1}^{m_2})\cdot t_{n-1}^{n-1})$$ $$-(m-m_2)\cdot \rho_{n-1})+(m-m_2)\cdot \rho_{n-1}\geq$$ $$d'_{n-1}(a^{m_1}-k_1^n(b_{m_1}^{m_2})\cdot t_1^{n-1}-\ldots-k_{n-1}^n(b_{m_1}^{m_2})\cdot t_{n-1}^{n-1},$$ $$b^{m_1}-k_1^n(a_{m_1}^{m_2})\cdot t_1^{n-1}-\ldots-k_{n-1}^n(a_{m_1}^{m_2})\cdot t_{n-1}^{n-1})=d'_{n-1}(a,b),$$
where only the last equality is not clear and does not follow from the previous discussion. We prove it here. It suffices to prove that $$a=a^{m_1}-k_1^n(b_{m_1}^{m_2})\cdot t_1^{n-1}-\ldots-k_{n-1}^n(b_{m_1}^{m_2})\cdot t_{n-1}^{n-1}$$ and similarly that $$b=b^{m_1}-k_1^n(a_{m_1}^{m_2})\cdot t_1^{n-1}-\ldots-k_{n-1}^n(a_{m_1}^{m_2})\cdot t_{n-1}^{n-1}.$$ We stress here that it is not a typo and indeed we claim that $a=a^{m_1}-k_1^n(b_{m_1}^{m_2})\cdot t_1^{n-1}-\ldots-k_{n-1}^n(b_{m_1}^{m_2})\cdot t_{n-1}^{n-1}$ instead of $a=a^{m_1}-k_1^n(a_{m_1}^{m_2})\cdot t_1^{n-1}-\ldots-k_{n-1}^n(a_{m_1}^{m_2})\cdot t_{n-1}^{n-1}$.

Since $a, a^{m_1}\in T_1^{n-1}\oplus\ldots\oplus T_{n-1}^{n-1}$ and $a=a^{m_1}+a_{m_1}^{m_2}+a_{m_2}$, we write $$a_{m_2}=-a_{m_1}^{m_2}+x,$$ where $x=a-a^{m_1}\in T_1^{n-1}\oplus\ldots\oplus T_{n-1}^{n-1}$, while $a_{m_1}^{m_2}\in T_1^n\oplus\ldots\oplus T_{n-1}^n$. Analogously, we write $$b_{m_2}=-b_{m_1}^{m_2}+y,$$ where $y=b-b^{m_1}\in T_1^{n-1}\oplus\ldots\oplus T_{n-1}^{n-1}$, while $b_{m_1}^{m_2}\in T_1^n\oplus\ldots\oplus T_{n-1}^n$. Thus we need to prove that $$x=-k_1^n(b_{m_1}^{m_2})\cdot t_1^{n-1}-\ldots-k_{n-1}^n(b_{m_1}^{m_2})\cdot t_{n-1}^{n-1}$$ and $$y=-k_1^n(a_{m_1}^{m_2})\cdot t_1^{n-1}-\ldots-k_{n-1}^n(a_{m_1}^{m_2})\cdot t_{n-1}^{n-1}.$$

Denote by $\phi$ the automorphism of $T_1^{n-1}\oplus\ldots\oplus T_{n-1}^{n-1}\oplus T_1^n\oplus\ldots\oplus T_{n-1}^n$ which sends $t_j^n$ to $t_j^{n-1}$ and $t_j^{n-1}$ to $t_j^n$, for $j\leq n-1$. Observe that $\phi(a_{m_2})=b_{m_2}$ and conversely $\phi(b_{m_2})=a_{m_2}$. Indeed, that immediately follows from the fact that for $m_2<j\leq m$, we have $(a_j,b_j)\in  \{(\varepsilon\cdot t_j^{n-1},\varepsilon\cdot t_j^n),(\varepsilon\cdot t_j^n,\varepsilon\cdot t_j^{n-1}):j\leq n-1, \varepsilon\in\{1,-1\}\}$, and that $a_{m_2}=\sum_{j=m_2+1}^m a_j$, $b_{m_2}=\sum_{i=m_2+1}^m b_i$. Then since $\phi[T_1^{n-1}\oplus\ldots\oplus T_{n-1}^{n-1}]=T_1^n\oplus\ldots\oplus T_{n-1}^n$ and $\phi[T_1^n\oplus\ldots\oplus T_{n-1}^n]=T_1^{n-1}\oplus\ldots\oplus T_{n-1}^{n-1}$, it follows that $$\phi[-a_{m_1}^{m_2}]=y$$ and $$\phi[-b_{m_1}^{m_2}]=x.$$ Finally, since $$\phi[-b_{m_1}^{m_2}]=-k_1^n(b_{m_1}^{m_2})\cdot t_1^{n-1}-\ldots-k_{n-1}^n(b_{m_1}^{m_2})\cdot t_{n-1}^{n-1}$$ and $$\phi[-a_{m_1}^{m_2}]=-k_1^n(a_{m_1}^{m_2})\cdot t_1^{n-1}-\ldots-k_{n-1}^n(a_{m_1}^{m_2})\cdot t_{n-1}^{n-1},$$ we are done.

This finishes the proof of the claim and the whole embedding construction.

\section{Generalizations and open questions}
Let $\kappa>\aleph_0$ be an uncountable cardinal number such that $\kappa^{<\kappa}=\kappa$, i.e. $\sup_{\gamma<\kappa} \kappa^{\gamma}=\kappa$. Any inaccessible cardinal is an example. Also, for any cardinal $\lambda$ such that $\lambda^+=2^\lambda$, $\kappa=\lambda^+$ is another example. Thus, for example under GCH any isolated cardinal has this property.

For such cardinals the generalized Fra\" iss\' e theorem holds. Notice at first that $\kappa=\kappa^{\aleph_0}\geq \mathfrak{c}$, where $\mathfrak{c}$ is the cardinality of continuum.

We shall consider the class $\mathcal{G}_{\kappa}$ of all groups $F$ where
\begin{itemize}
\item algebraically, $F$ is an infinite direct sum $\bigoplus_{\alpha<\lambda} \Int$ where $\lambda<\kappa$; i.e. a free abelian group of $\lambda$-many generators,
\item $F$ is equipped with an invariant metric $d$ (arbitrary, not necessarily rational-valued and not necessarily generated by values on some proper subset of $F^2$),
\item embeddings between groups from $\mathcal{G}_{\kappa}$ again send free generators to free generators and are isometric.
\end{itemize}

Using the property that $\kappa^{<\kappa}=\kappa$, one can readily check that the cardinality of $\mathcal{G}_{\kappa}$ is $\kappa$ even though we consider arbitrary real-valued invariant metrics.

The amalgamation is proved analogously as in the proof of Proposition \ref{isFraisse}. Suppose that we are given groups $G_0,G_1,G_2\in \mathcal{G}_{\kappa}$ with invariant metrics $d_0,d_1,d_2$ such that $G_i$, for $i\in\{1,2\}$, is isometrically isomorphic to $G_0\oplus F_i$, where $F_i\in \mathcal{G}_{\kappa}$. Then the amalgam is algebraically $G_0\oplus F_1\oplus F_2$. For any $(g_0,f_1,f_2)\in G_0\oplus F_1\oplus F_2$ we define $d_3((g_0,f_1,f_2),0)$ as $$\inf\{d_1((g,f',0),0)+d_2((h,0,f''),0):(g_0,f_1,f_2)=(g,f',0)+(h,0,f'')\}.$$ It is straightforward to check that this is correct. We note that this is equivalent to how the amalgamation is done in the category of Banach spaces.\\

Thus we get that $\mathcal{G}_{\kappa}$ is a generalized Fra\" iss\' e class of size $\kappa$. It has some Fra\" iss\' e limit $G_\kappa$ which is alebraically a free abelian group of $\kappa$-many generators equipped with an invariant metric $d$. Let us denote by $\mathbb{G}_\kappa$ the completion of $G_\kappa$.

Before we state the characterization of $G_\kappa$ let us note that the symbol $G\sqsubseteq H$ again denotes that $G$ is a subgroup of $H$ and moreover, the free generators of $G$ are a subset of free generators of $H$, exactly as in the separable case.
\begin{fact}\label{char_Gkappa}
Let $F\in \mathcal{G}_{\kappa}$ be such that $F\sqsubseteq G$. Let $H\in \mathcal{G}_{\kappa}$ be such that there is an isometric monomorphism $\iota: F\hookrightarrow H$ such that $\iota[F]\sqsubseteq H$. Then there exists an isometric monomorphism $\rho:H\hookrightarrow G$ such that $\rho\circ \iota=\mathrm{id}_F$ and $\rho[H]\sqsubseteq G$.
\end{fact}

Now suppose that $H$ is an arbitrary group of size (or equivalently density or weight) at most $\kappa$ that is equipped with invariant metric $d_H$. Using Proposition \ref{sums_dense} as in the separable case, we may without loss of generality assume that $H$ has a dense subgroup which is algebraically free abelian group of $\kappa$-many generators. Enumerate these generators as $\{h_\alpha: \alpha<\kappa\}$ and denote by $H_\alpha$ the subgroup generated by the first $\alpha$-many generators for every $\alpha<\kappa$. Since each $H_\alpha$ belongs to $\mathcal{G}_{\kappa}$, by transfinite induction using Fact \ref{char_Gkappa} we can embed the dense subgroup of $H$ generated by all the free generators $\{h_\alpha: \alpha<\kappa\}$ into $G_\kappa$. It follows that $H$ lies in $\mathbb{G}_\kappa$.\\

We conclude with few remarks and questions.\\

\begin{remark}
During the review process of the paper we obtained as a corollary of more general results that the metrically universal second-countable abelian group from the main theorem is extremely amenable, i.e. every continuous action of this group on a compact Hausdorff space has a fixed point (see \cite{Do}).
\end{remark}

\noindent {\bf Question 1.} Does there exist a metrically universal abelian group of weight $\kappa$ for every uncountable cardinal $\kappa$?\\

\noindent {\bf Question 2.} We would like to promote the question mentioned in \cite{Sk} whether there exists a metrically universal second-countable Hausdorff topological group with left-invariant metric.\\

\bigskip
\noindent {\bf Acknowledgment.} This work was done during the trimester program ``Universality and Homogeneity" at the Hausdorff Research Institute for
Mathematics in Bonn. The author would therefore like to thank for the support and great working conditions there. The author is also grateful to Wies\l aw Kubi\' s for discussions on this topic and to the referee whose comments significantly helped to improve the presentation.


\begin{thebibliography}{10}
\bibitem{CaVe}
P.J. Cameron, A.M. Vershik, \emph{Some isometry groups of the Urysohn space}, Ann. Pure Appl. Logic 143 (2006), no. 1-3, 70-78
\bibitem{Do}
M. Doucha, \emph{Generic norms and metrics on countable abelian groups}, preprint, arXiv:1605.06323 [math.GN], 2016
\bibitem{Fr}
R. Fra\" iss\' e, \emph{Sur quelques classifications des systèmes de relations},
Publ. Sci. Univ. Alger. Sér. A. 1 (1954), 35--182
\bibitem{Gao}
S. Gao, \emph{Invariant Descriptive Set Theory}, CRC Press, 2009
\bibitem{Ho}
W. Hodges, \emph{Model theory. Encyclopedia of Mathematics and its Applications, 42}, Cambridge University Press, Cambridge, 1993
\bibitem{Kat}
M. Kat\v etov, \emph{On universal metric spaces}, General topology and its relations to modern analysis and algebra, VI (Prague, 1986),  323–330, 
Res. Exp. Math., 16, Heldermann, Berlin, 1988
\bibitem{Kub}
W. Kubi\' s, \emph{Fra\" iss\' e sequences: category-theoretic approach to universal homogeneous structures}, Ann. Pure Appl. Logic 165 (2014), no. 11, 1755--1811
\bibitem{Nie1}
P. Niemiec, \emph{Universal valued Abelian groups}, Adv. Math. 235 (2013), 398-449
\bibitem{Nie2}
P. Niemiec, \emph{Urysohn universal spaces as metric groups of exponent 2}, Fund. Math. 204 (2009), no. 1, 1-6
\bibitem{Pes}
V. Pestov, \emph{Dynamics of infinite-dimensional groups.} The Ramsey-Dvoretzky-Milman phenomenon. University Lecture Series, 40. American Mathematical Society, Providence, RI, 2006
\bibitem{Sk}
S. Shkarin, \emph{On universal abelian topological groups}, Mat. Sb.  190  (1999), no. 7, 127-144
\bibitem{Ur}
P. S. Urysohn, \emph{Sur un espace m\' etrique universel}, Bull. Sci. Math. 51 (1927), 43-64, 74-96
\end{thebibliography}
\end{document}